\newtheorem{theorem}{Theorem}
\newtheorem{lemma}[theorem]{Lemma}
\newtheorem{proposition}[theorem]{Proposition}
\newtheorem{corollary}[theorem]{Corollary}
\newtheorem{question}[theorem]{Question}
\newtheorem{definition}[theorem]{Definition}
\numberwithin{equation}{section}
\DeclareMathOperator{\supp}{supp}
\DeclareMathOperator{\proj}{Proj}
\newcommand{\N}{\mathbb{N}}
\newcommand{\A}{\mathcal{A}}
\newcommand{\C}{\mathbb{C}}
\newcommand{\E}{\mathbb{E}}
\newcommand{\Q}{\mathbb{Q}}
\newcommand{\R}{\mathbb{R}}
\newcommand{\PP}{\mathbb{P}}
\newcommand{\forces}{\Vdash}
\newcommand{\dom}{\text{dom}}
\newcommand{\BB}{\mathcal{B}}
\newcommand{\I}{\mathcal{I}}
\newcommand{\QQ}{\mathcal{Q}(\ell_2)}
\newcommand{\leqK}{\leq^\mathcal{K}}
\newcommand{\downset}[1]{\langle #1 \rangle}
\begin{document}

\author{Damian G\l odkowski}
\address{Institute of Mathematics, 
University of Warsaw, ul. Banacha 2, 02-097 Warszawa, Poland}
\email{\texttt{d.glodkowski@uw.edu.pl}}

\author{Piotr Koszmider}
\address{Institute of Mathematics of the Polish Academy of Sciences,
ul.  \'Sniadeckich 8,  00-656 Warszawa, Poland}
\email{\texttt{piotr.math@proton.me}}

\thanks{The first-named author was partially supported by the NCN (National Science
Centre, Poland) research grant no.\ 2021/41/N/ST1/03682.}

\thanks{The second-named author was partially supported by the NCN (National Science
Centre, Poland) research grant no.\ 2020/37/B/ST1/02613.}

\thanks{For the purpose of Open Access, the authors have applied a CC-BY public
 copyright license to any Author Accepted Manuscript (AAM) version arising from this submission.}

\subjclass[2020]{}

\title[Products that do not embed into $\QQ$]{Products of C*-algebras that do not embed into the Calkin algebra}

\begin{abstract} We consider the Calkin algebra $\QQ$, i.e.,  the quotient of the algebra $\mathcal B(\ell_2)$ of all
bounded linear operators on the separable Hilbert space $\ell_2$ divided by the ideal $\mathcal K(\ell_2)$
of all compact operators on $\ell_2$. 
We show that in the Cohen model of set theory {\sf ZFC} there is no embedding of the product 
$(c_0(2^\omega))^\N$ of infinitely many copies of the abelian C*-algebra $c_0(2^\omega)$ into $\QQ$ (while
$c_0(2^\omega)$ always embeds into $\QQ$). 
This enlarges the collection of the known  examples due to Vaccaro and to McKenney and Vignati of abelian algebras, 
asymptotic sequence algebras, reduced products and coronas of stabilizations which
consistently do not
embed into the Calkin algebra. As in the Cohen model the rigidity of quotient structures fails in general,
our methods do not rely on these rigidity phenomena as is the case of most
examples mentioned above. 
The results should be
considered in the context of the result of Farah, Hirshberg and Vignati which says that
consistently all C*-algebras of density up to $2^\omega$ do embed into $\QQ$.
In particular, the algebra $(c_0(2^\omega))^\N$  consistently embeds into the Calkin algebra as well.
\end{abstract}

\maketitle

\section{Introduction}

For a separable Hilbert space $H$ we denote by $\mathcal{B}(H)$ the C*-algebra
 of all bounded linear operators on $H$ and by $\mathcal{K}(H)$ the ideal 
 of $\mathcal{B}(H)$ consisting of compact operators. 
 The quotient $\mathcal{Q}(\ell_2)=\mathcal{B}(\ell_2)/\mathcal{K}(\ell_2)$ is 
 called the Calkin algebra. The significance of the Calkin algebra for
 the understanding of the algebra $\mathcal{B}(\ell_2)$ has been realized
 since its inception in \cite{calkin}. This paper concerns the investigation 
 of the class of all
 C*-algebras which can be embedded\footnote{Unless stated otherwise, 
 by an embedding we mean an injective $*$-homomorphism of C*-algebras.} 
 into $\mathcal{Q}(\ell_2)$ (necessarily of density up to $2^\omega$
 as the density of $\QQ$ is $2^\omega$).  This class is denoted $\E$ following \cite{vaccaro-ijm}.
 
 It follows from
 the result of \cite{shelah-usvyatsov} which says that it is consistent that
 no Banach space of density $2^\omega$ contains isometric copies
 of all Banach spaces of density $2^\omega$ (``isometric''  here may even be replaced by ``isomorphic'' 
 by \cite{pk-brech-univ}), that it is consistent that
 some abelian C*-algebra of density $2^\omega$ is not in $\E$ (as every Banach space isometrically
 embeds into the C*-algebra of continuous functions on its dual ball with the weak$^*$ topology).
 However, the witnesses of the nonuniversality in \cite{shelah-usvyatsov}  or \cite{pk-brech-univ} are 
 fundamentally nonconstructive.
 
 Concrete consistent examples of (abelian) C*-algebras of density $2^\omega$ not in $\E$
 have been produced at least since the doctoral thesis of Vignati \cite{vignati-thesis} 
 which was followed by other algebras or other consistent hypotheses in 
 \cite{vignati-mckenney}, \cite{vaccaro-thesis}, \cite{vaccaro-ijm}.
 
 On the other hand, on the positive side, it is not difficult to see that $\mathcal{B}(\ell_2)$
  (obviously with many of its subalgebras) is in $\E$, and so all separable C*-algebras 
  belong to the class $\E$ (see e.g. \cite[Corollary II.6.4.10]{blackadar}).
 Perhaps the simplest example of a C*-algebra that does not embed into $\mathcal{B}(\ell_2)$ but
 does embed into $\QQ$ is 
 the abelian algebra $c_0(2^\omega)$.  The existence of such an embedding follows from the existence
 of an uncountable almost disjoint (i.e., with finite pairwise intersections) family
 of infinite subsets of $\N$.
 
 Moreover, it was shown by Farah, Hirshberg and Vignati in \cite{farah-universal} that all C*-algebras of
 density equal to the first uncountable cardinal $\omega_1$ are in $\E$.
  In particular, under the continuum hypothesis {\sf CH}, the Calkin algebra
 is universal in the class of C*-algebras of density $2^\omega$. Also
 Martin's axiom implies that all C*-algebras of density strictly less than $2^\omega$
 belong to $\E$ (\cite{farah-ma}). 
 
 The main purpose of this note is to provide concrete
 examples  of C*-algebras that consistently do not embed into the Calkin algebra, i.e., are not in $\E$ which are not
 covered by the results of \cite{vignati-thesis}, \cite{vignati-mckenney}, \cite{vaccaro-thesis}, \cite{vaccaro-ijm},
 are obtained by rather different methods, 
 and exist in the absence of the combinatorial or forcing principles
 considered in \cite{vignati-thesis} \cite{vignati-mckenney},  \cite{vaccaro-ijm} and more
 interestingly, in the absence of the general rigidity of quotient structures:
 
 \begin{theorem}\label{main} In the Cohen model\footnote{By the Cohen model we mean a model 
 obtained from a model of {\sf CH} by adding $\omega_2$ Cohen reals.}
 the abelian C*-algebra $(c_0(2^\omega))^\N$ does not embed into the Calkin algebra
 $\mathcal{Q}(\ell_2)$.
 As a consequence the following types of C*-algebras do not embed into the Calkin algebra:
 \begin{itemize}
 \item products $\prod_{n\in \N}\A_n$,
 \item reduced product algebras  $$\prod_{n\in \N}{\A_n}/\bigoplus_{\mathcal I}{\A_n},$$
 \item the corona algebras ${\mathcal Q}({\A}\otimes {\mathcal K})$ of the stabilization
 ${\A}\otimes {\mathcal K}$ of  $\A$,
 \end{itemize}
  where each of the C*-algebras ${\A}, {\A}_n$ admits a pairwise orthogonal family of 
 projections of cardinality $2^\omega$ and $\I$ is any ideal of subsets of $\N$  such that
 the Boolean algebra $\wp(\N)/\mathcal I$ is infinite.
 \end{theorem}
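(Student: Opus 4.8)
The plan is to reduce the three displayed consequences to the single statement that $(c_0(2^\omega))^\N$ does not embed into $\QQ$, and then to prove that statement in the Cohen model. For the reductions it suffices to produce a copy of $(c_0(2^\omega))^\N$ inside each of the three algebras, using the basic fact that the norm-closure of $\lin\{p_\alpha:\alpha\in I\}$ for a pairwise orthogonal family of nonzero projections $(p_\alpha)_{\alpha\in I}$ is $*$-isomorphic to $c_0(I)$. Applied to the factors this gives $c_0(2^\omega)\hookrightarrow\A_n$, hence $(c_0(2^\omega))^\N=\prod_n c_0(2^\omega)\hookrightarrow\prod_n\A_n$. For the reduced products one uses that an infinite Boolean algebra $\wp(\N)/\I$ contains an infinite pairwise disjoint set of nonzero elements, hence (after a standard disjointification) genuinely pairwise disjoint sets $B_k\subseteq\N$ with $B_k\notin\I$; fixing orthogonal families $(p^n_\alpha)_{\alpha<2^\omega}$ in each $\A_n$, the map sending $(f_k)_k$ to the class of the sequence whose $n$-th coordinate is $\sum_\alpha f_k(\alpha)p^n_\alpha$ when $n\in B_k$ and $0$ otherwise is a $*$-homomorphism whose value at the coordinates of any $B_{k_0}$ has norm $\|f_{k_0}\|$, hence an isometric embedding into $\prod_n\A_n/\bigoplus_\I\A_n$ because $B_{k_0}\notin\I$. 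Taking $\I$ to be the Fr\'echet ideal yields $(c_0(2^\omega))^\N\hookrightarrow\prod_n\A/\bigoplus_n\A$, and this reduced product embeds into $\mathcal Q(\A\otimes\mathcal K)$ by sending a bounded sequence $(a_n)_n$ to the class of the block-diagonal multiplier $\sum_n a_n\otimes e_{nn}$ of $\A\otimes\mathcal K$ (the $e_{ij}$ being matrix units of $\mathcal K$), which lies in $\A\otimes\mathcal K$ exactly when $\|a_n\|\to 0$; this last point is a routine double-centralizer computation, and it shows the map factors through an injective $*$-homomorphism of $\prod_n\A/\bigoplus_n\A$ into $\mathcal Q(\A\otimes\mathcal K)$.

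For the main statement I would argue by contradiction inside $V[G]$, where $G$ is generic for $\mathrm{Fn}(\omega_2\times\omega,2)$ over $V\models\mathsf{CH}$: suppose $\phi\colon(c_0(2^\omega))^\N\to\QQ$ is an embedding and write $\kappa=2^\omega=\omega_2$. For $\alpha<\kappa$ let $e_\alpha$ be the constant sequence with value the $\alpha$-th minimal projection of $c_0(2^\omega)$, and let $p^{(k)}_\alpha$ be the projection equal to that minimal projection in coordinate $k\in\N$ and $0$ elsewhere. Being an isometric $*$-homomorphism, $\phi$ preserves the pairwise orthogonality of $\{p^{(k)}_\alpha:\alpha<\kappa,\ k\in\N\}$ and of $\{e_\alpha:\alpha<\kappa\}$, the relations $p^{(k)}_\alpha\leqK e_\alpha$ and $p^{(k)}_\alpha\perp e_\beta$ for $\alpha\ne\beta$, and $\|e_\alpha-\sum_{k\le K}p^{(k)}_\alpha\|=1$ for all $K$; while in the source $e_\alpha=\sup_k p^{(k)}_\alpha$ and $\bigwedge_K(e_\alpha-\sum_{k\le K}p^{(k)}_\alpha)=0$, so $e_\alpha$ is a supremum of its orthogonal pieces that is \emph{not} reached in norm --- and it is this feature of the $\N$-indexed product (absent from $c_0(2^\omega)$ itself) that the argument must exploit. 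For each $\alpha$ the restriction of $\phi$ to the separable subalgebra generated by $\{e_\alpha\}\cup\{p^{(k)}_\alpha:k\in\N\}$, a copy of the algebra of convergent sequences, together with fixed lifts to $\mathcal B(\ell_2)$ of the projections involved, is coded by a real $r_\alpha$, for which I fix a name $\dot r_\alpha$ with countable support. Using $\mathsf{CH}$ in $V$, I would then apply the $\Delta$-system lemma to these supports and thin using the at most $\aleph_1$ isomorphism types of countably supported names, obtaining $S\in[\kappa]^\kappa$ and a countable root $\rho$ such that $\{\supp(\dot r_\alpha):\alpha\in S\}$ is a $\Delta$-system with root $\rho$ and, for $\alpha,\beta\in S$, $\dot r_\beta$ is the image of $\dot r_\alpha$ under the canonical automorphism of $\mathrm{Fn}(\omega_2\times\omega,2)$ fixing $\rho$ pointwise and carrying $\supp(\dot r_\alpha)\setminus\rho$ onto $\supp(\dot r_\beta)\setminus\rho$.

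For a pair $\alpha\ne\beta$ in $S$, the reals $r_\alpha$ and $r_\beta$ are then added over $V[G\restriction\rho]$ by mutually generic Cohen forcings and are isomorphic copies of each other, so any statement forced about the pair $(r_\alpha,r_\beta)$ already follows from their common part $r_\alpha\restriction\rho$; the contradiction would come from showing that the recorded relations --- in particular that $\phi(e_\alpha)$ dominates the orthogonal sequence $(\phi(p^{(k)}_\alpha))_k$ as its least upper bound inside the image of $\phi$ yet is not approximated in norm by the partial sums, the symmetric facts for $\beta$, the orthogonality $\phi(e_\alpha)\perp\phi(e_\beta)$, and the action of $\phi$ on elements that mix the two ``towers'' --- cannot simultaneously hold for such a mutually generic pair. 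The hard part will be precisely this: isolating the finitary configuration among the data $(r_\alpha)_{\alpha<\kappa}$ that is on the one hand forced by the C*-algebraic relations of $(c_0(2^\omega))^\N$ and on the other hand cannot be realized by a mutually generic Cohen pair. Since in the Cohen model there are nontrivial automorphisms and nontrivial embeddings of quotient structures, one cannot argue that a hypothetical $\phi$ must be ``trivial'' and then rule trivial ones out; the spreading-out of Cohen generics used above, driven by the $\N$-indexed product structure, is what replaces such rigidity arguments.
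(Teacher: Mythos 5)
Your reductions of the three listed classes to the single statement about $(c_0(2^\omega))^\N$ are correct and essentially identical to the paper's (Theorem \ref{reduced_products}, Corollary \ref{cor_products}, Lemma \ref{blablabla_embedding}, Corollary \ref{corona-no-embedding}). The gap is in the core statement. You assemble the standard Cohen-model machinery ($\Delta$-systems of supports, thinning by isomorphism type of countably supported names under {\sf CH}, symmetry of the forcing) but stop exactly where the proof has to begin, and you say so: ``the hard part will be precisely this: isolating the finitary configuration.'' No such configuration is exhibited, and the family of elements you chose cannot supply one. Your towers are pairwise disjoint: the C*-subalgebra of $(c_0(2^\omega))^\N$ generated by all the $e_\alpha$ and $p^{(k)}_\alpha$ is the $c_0$-direct sum of $2^\omega$ copies of $C(\omega+1)$, and this embeds into $\QQ$ outright in {\sf ZFC} (partition each member of an almost disjoint family of size $2^\omega$ into infinitely many infinite pieces). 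Hence every relation expressible in terms of these elements alone --- orthogonality of the towers, each $\phi(e_\alpha)$ being a non-attained supremum of the $\phi(p^{(k)}_\alpha)$ --- is simultaneously realizable in $\QQ$, so no contradiction can be extracted from them, mutually generic pair or not. Your heuristic that ``any statement forced about the pair $(r_\alpha,r_\beta)$ already follows from their common part'' is also false as stated; what is actually available is transfer of forced statements along a symmetric condition (Lemma \ref{permutation-symmetric}), which is much weaker and has to be engineered.

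The missing idea is the choice of witnesses: the paper works with the diagonal projections $\chi_\gamma$, $\gamma\in\omega_2^\N$, which select the atom $\gamma(n)$ in each coordinate $n$, together with the atoms $\chi_{n,\alpha}$. Here each coordinate carries $\omega_2$-many atoms, each atom lies below many diagonals, and $\chi_{\gamma_1}\chi_{\gamma_2}=0$ for disjoint $\gamma_1,\gamma_2$ --- this is where the $\N$-indexed product is really used. Lifting to projections $E_{n,\alpha},B_\gamma$ with $E_{n,\gamma(n)}\leqK B_\gamma$, one attaches to each $E_{n,\alpha}$ a very orthonormal sequence of almost-range vectors, runs the $\Delta$-system and name-isomorphism arguments on their supports (Lemmas \ref{Delta_lemma_CH}, \ref{permutations13}, \ref{functions}), builds disjoint $\gamma_\xi,\gamma_\eta$, and then finds a coordinate $n$ and a $(\sigma,S,S')$-symmetric condition forcing a single unit vector $v$ to satisfy $\|E_{n,\gamma_\xi(n)}(v)\|,\|E_{n,\gamma_\eta(n)}(v)\|\geq 0.99$ simultaneously; together with $E_{n,\gamma(n)}\leqK B_\gamma$ and Lemma \ref{rachunki} this forces $\|B_{\gamma_\xi}B_{\gamma_\eta}(v)\|$ to be large along a very orthonormal sequence, so $B_{\gamma_\xi}B_{\gamma_\eta}$ is non-compact while $T(\chi_{\gamma_\xi})T(\chi_{\gamma_\eta})=0$ (Proposition \ref{non-compact} and Theorem \ref{theorem_embedding_calkin}). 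The genuinely delicate step, absent from your sketch, is Lemma \ref{symmetric-condition}: extending a symmetric condition so that it decides the compactness moduli $\dot h_\xi,\dot h_\eta$ and the test vector while remaining symmetric, which is what allows the forced estimate for $\gamma_\xi$ to be transferred to $\gamma_\eta$ on the same vector.
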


 \begin{proof}
     See Theorem \ref{reduced_products}, Corollary \ref{cor_products} and Corollary \ref{corona-no-embedding}. 
 \end{proof}
 
 Reduced products of C*-algebras have been considered at  least in
 \cite{vignati-thesis}, \cite{vignati-mckenney}, \cite{farah-book}, \cite{farah-shelah}, \cite{ghasemi-ijm},
 \cite{ghasemi-jsl}, while the coronas of stabilizations were considered in the context of embeddings into
 the Calkin algebra in 
 \cite{vignati-mckenney}, \cite{vaccaro-ijm}, \cite{farah-kazhdan}. It is also interesting to compare 
 the Calkin algebra with other corona algebras:
 Farah shows in \cite{farah-kazhdan} that $\QQ$ is not isomorphic to the corona of
 the stabilization of the Cuntz algebra i.e. 
 $\mathcal{Q}(\mathcal{O}_\infty\otimes \mathcal{K}(\ell_2))$, 
 though these algebras are not distinguishable from the K-theoretical point of view. 
 This research is followed in \cite{farah-szabo}.
 As mentioned in \cite{farah-kazhdan} the problem of the existence of such isomorphisms is important, 
 since they may induce K-theory reversing automorphisms of $\QQ$ 
 (such automorphisms are not known to exist consistently). 
 
 Let us compare
 our Theorem \ref{main} to some of the results of the papers mentioned above.
 Concerning the products and direct sums one may make some conclusion from the results in \cite{vaccaro-ijm}, where
 it is proved that the tensor products
 $\mathcal{Q}(\ell_2)\otimes \mathcal A$ do not embed into $\mathcal{Q}(\ell_2)$ for any unital
 infinite dimensional $\A$ (see the proof rather than the statement of Theorem 1.2). Consider
 the C*-algebra  $\A=C(\omega+1)$ of continuous functions on a convergent sequence with its limit.
 The tensor product $\mathcal{Q}(\ell_2)\otimes C(\omega+1)$ is
  isomorphic to $C(\omega+1, \mathcal{Q}(\ell_2))$ (\cite{williams})
 and does not embed into the Calkin algebra by \cite{vaccaro-ijm}.
 This algebra $\A$ seems very close to 
 $c_0(\mathcal{Q}(\ell_2))=\bigoplus_{Fin} \mathcal{Q}(\ell_2)$ which clearly embeds into $\QQ$,
 also $\A$  embeds into $(\mathcal{Q}(\ell_2))^\N$. So 
 the result of \cite{vaccaro-ijm} implies that $(\mathcal{Q}(\ell_2))^\N$ does not embed into the Calkin
 algebra.  However our result allows much smaller C*-algebras $\A_n$ in place of $\QQ$ at
 the price of taking full products.
 
Concerning reduced products associated with ideals  Corollary 5.3.14   \cite{vignati-thesis}  
or Theorem 9.1. of \cite{vignati-mckenney}
 on reduced products may be used for one-dimensional C*-algebras to conclude that the abelian C*-algebras 
 of continuous functions $C(K)$
 where $K$ is the Stone space of a Boolean algebra $\wp(\N)/{\mathcal I}$  for
 a meager, dense ideal in $\wp(\N)$ are not in $\E$ assuming
 the Proper Forcing Axiom {\sf PFA} or the Open Coloring Axiom {\sf OCA} together with
 Martin's axiom {\sf MA}. Here we note that by Talagrand's characterization of meager ideals 
 (\cite{talagrand}, Theorem  4.1.2 of \cite{bartoszynski}) such algebras $C(K)$ contain
big Boolean algebras of projections $p$ such that $pC(K)$ is again induced by meager dense ideal and so
 $pC(K)$ is not in $\E$ as well in the above model. However,  $pc_0(2^\omega)^\N$ is a product of finite dimensional
 algebras for any projection $p$ in $c_0(2^\omega)^\N$ and so it is isomorphic to $\ell_\infty$ and hence is in $\E$.
    Also the reduced products of our Theorem \ref{main}
 are associated with any ideal $\I$ such that $\wp(\N)/\I$ is infinite but we  have requirements on the algebras $\A_n$  while
 the ideals in  \cite{vignati-thesis}   and \cite{vignati-mckenney} need to be meager (and dense and contain
 the finite sets) with the algebras  $\A_n$s required only to be nontrivial.

 Another concrete consistent example of an abelian C*-algebra not in $\E$ 
 is $C_0(\omega_2)$ in the Cohen model, where $\omega_2$ is considered with the order
 topology (\cite{vaccaro-thesis}). 
 Note that this algebra does not embed into $(c_0(2^\omega))^\N$ as
 a  strictly increasing well-ordered sequence of length $\omega_2$
   of projections in $(c_0(2^\omega))^\N$ would induce a  strictly increasing sequence of projections
   of the same length in $c_0(2^\omega)$ which is impossible.

Let us finish this discussion of our result with a comparison of the Cohen model, where
we work with the models of \cite{vignati-thesis}, \cite{vignati-mckenney}, \cite{vaccaro-ijm}
where most  of the discussed examples outside of $\E$ can be found. 
In the above papers the authors work under powerful and  elegant axioms (forcing axioms
and/or the Open Coloring Axiom {\sf OCA}) which imply the rigidity of quotient structures
like $\wp(\N)/Fin$, $\ell_\infty/c_0$ or $\QQ$ in the sense that all automorphisms (or even endomorphism
in case of $\QQ$ and \cite{vaccaro-ijm}) of these
structures are trivial. In the Cohen model the rigidity fails in general, for example
there are automorphisms of $\wp(\N)/Fin$ which are not induced by a bijection between some 
cofinite subsets of $\N$ as shown in \cite{shelah-steprans} (it seems still unknown if 
outer automorphisms of $\QQ$ exist in this model).  So our results
shed some light on the relation between the rigidity phenomena and the class $\E$.
As in the case of the algebra $\wp(\N)/Fin$ it may be also worthy
to develop the theory of the Calkin algebra in the Cohen model as
``the Cohen model is probably the most intensively investigated model of
the negation of {\sf CH} of all'' (\cite{dow-topappl} \S7).

The structure of the paper is as follows. The next section contains preliminaries. 
These include notation, terminology and lemmas concerning basic properties 
of discussed objects. In Section 3 we prove the key fact on the impossibility of 
embedding the abelian C*-algebra $c_0(2^\omega)^\N$
 into the Calkin algebra in the Cohen model. In Section 4 we conclude the components of Theorem \ref{main}.
The last section includes some concluding remarks and questions.

\section{Preliminaries}

\subsection{Notation} Most of the basic symbols used in the paper should be standard. The symbol $\N$ denotes the set of non-negative integers, $\Q$ is the set of rational numbers, $\omega_n$ stands for the $n$-th uncountable cardinal number, $2^\omega$ is the cardinality of the set of real numbers. The symbol $f\upharpoonright A$ denotes the restriction of a function $f$ to a set $A$. The family of subsets of a set $A$ of cardinality $\lambda$ is denoted by $[A]^\lambda$, and the family of finite subsets of $A$ is denoted by $[A]^{<\omega}$. The symbol $<_{lex}$ stands for the lexicographic order on pairs of ordinals numbers. The identity function on a set $A$ is denoted by $Id_A$. More specific terminology will be explained in next subsections of this section. For unexplained terminology see \cite{farah-book} and \cite{jech}. 

\subsection{Products, reduced products and coronas}

Following notation from \cite[Chapter 2]{farah-book}, given C*-algebras $\A_n$ for $n\in\N$
 we define the product $\prod_{n\in\N} \A_n$ as the algebra of bounded 
 sequences $(a_n)_{n\in \N}$ such that $a_n\in \A_n$ for each $n\in\N$ 
 with the standard supremum norm and coordinate-wise operations. 
 The product of countably many copies of a C*-algebra $\A$ is denoted by $\A^\N$.

 Given an ideal $\I$ of subsets of $\N$ we denote by $\bigoplus_{\I} \A_n$ 
 the closure of $\{(a_n)_{n\in\N}\in \prod_{n\in\N}\A_n: \{n\in \N: a_n\neq 0\}\in \I\}$. 
 If $\I=Fin$ is the ideal of finite subsets of $\N$, then $\bigoplus_{Fin}\A_n$ is the standard direct sum of $\A_n$'s.

Let us recall some basic facts concerning multiplier algebras and coronas. 

\begin{definition}
    A C*-algebra $\mathcal{M}(\mathcal{A})\supseteq\mathcal{A}$ 
    is called the \textbf{multiplier algebra} of a C*-algebra 
    $\mathcal{A}$, if $\mathcal{A}$ is an essential ideal in 
    $\mathcal{M}(\mathcal{A})$ (i.e. $\mathcal{A}^\bot_{\mathcal{M}(\mathcal{A})}= \{0\}$,
     where $\mathcal{A}^\bot_{\mathcal{D}}=\{x\in \mathcal{D}: \mathcal{A}x=\{0\}\}$) 
     and for every C*-algebra $\mathcal{D}$ containing $\mathcal{A}$ 
     as an ideal the identity map $Id\colon \mathcal{A}\to\mathcal{B}(\mathcal{A})$ 
     has a unique extension to $\mathcal{D}$ with kernel $\mathcal{A}^\bot_\mathcal{D}$. 

    The quotient algebra $\mathcal{Q}(\mathcal{A})= \mathcal{M}(\mathcal{A})/\mathcal{A}$
     is called the \textbf{corona} of $\mathcal{A}$.
\end{definition}

It is well-known that $\mathcal{M}(\mathcal{K}(\ell_2))\equiv \mathcal{B}(\ell_2)$ 
and $\mathcal{Q}(\mathcal{K}(\ell_2))\equiv \QQ$. 
If $X$ is a locally compact Hausdorff space, then 
$\mathcal{M}(C_0(X))\equiv C(\beta X)$ and 
$\mathcal{Q}(C_0(X))\equiv C(\beta X\backslash X)$, 
and so the multiplier algebra should be seen as 
the non-commutative analogue of the Stone-\v{C}ech 
compactification of a topological space, and corona as 
the non-commutative analogue of the Stone-\v{C}ech remainder. 

The multiplier algebra of $\A$ may be characterized as the algebra of double
 centralizers of $\A$ (see \cite[Theorem II.7.3.4]{blackadar}). 

\begin{definition}
    A pair $(L, R)$ of linear maps on a C*-algebra $\mathcal{A}$
     is a \textbf{double centralizer} of $\mathcal{A}$, if $xL(y)=R(x)y$ for all $x,y\in \mathcal{A}$. 
\end{definition}

For more information on multiplier algebras and coronas see \cite{blackadar} or \cite{farah-book}. 

For a set $A$ by $c_0(A)$ we mean the algebra of (complex) sequences on $A$
 converging to 0 (i.e. $(a_\alpha)_{\alpha\in A}\in c_0(A)$, if  for every $\varepsilon>0$ 
 the set $\{\alpha\in A: |a_\alpha|>\varepsilon\}$ is finite) with pointwise multiplication and the supremum norm. 
 We put $(a_\alpha)_{\alpha\in A}^*= (\overline{a}_\alpha)_{\alpha\in A}$.

The following lemma follows from the existence of an almost disjoint family 
of subsets of $\N$ of cardinality $2^\omega$ (cf. the proof of the implication $(b) \to (c)$ in \cite{pk-brech-sums}).

\begin{lemma}\label{c_0(c)}
    There is an embedding of $c_0(2^\omega)$ into $\ell_\infty/c_0$.
\end{lemma}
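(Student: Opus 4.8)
The plan is to produce the embedding from a fixed almost disjoint family of size continuum, following the classical idea of sending minimal projections to classes of characteristic functions. First I would fix an almost disjoint family $\{A_\alpha:\alpha<2^\omega\}\subseteq[\N]^\omega$ (so the $A_\alpha$ are infinite and $A_\alpha\cap A_\beta$ is finite for $\alpha\neq\beta$); such a family exists in {\sf ZFC}, for instance by identifying $\N$ with the full binary tree $2^{<\omega}$ and letting $A_x=\{x\upharpoonright n:n\in\N\}$ for $x\in 2^\omega$, since then $A_x\cap A_y$ is an initial segment of $x$, hence finite, whenever $x\neq y$.

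Let $D\subseteq c_0(2^\omega)$ be the dense $*$-subalgebra of finitely supported sequences. For $x=(a_\alpha)_{\alpha<2^\omega}\in D$ I would pick a finite set $\{\alpha_1,\dots,\alpha_n\}\supseteq\supp(x)$, put $F=\bigcup_{i\neq j}(A_{\alpha_i}\cap A_{\alpha_j})$ (a finite set), and set $f_x=\sum_{i=1}^n a_{\alpha_i}\chi_{A_{\alpha_i}\setminus F}\in\ell_\infty$; note that the sets $A_{\alpha_i}\setminus F$ are now pairwise disjoint and infinite. The first thing to check is that the class $[f_x]\in\ell_\infty/c_0$ depends only on $x$: enlarging the index set or enlarging $F$ alters $f_x$ only on a finite set, hence not modulo $c_0$. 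This yields a well-defined map $\Phi\colon D\to\ell_\infty/c_0$, $\Phi(x)=[f_x]$, and this well-definedness — realised by deleting the finite overlap $F$ so that the supports occurring in any single finitely supported element become genuinely disjoint — is really the only delicate point of the argument.

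Next I would verify that $\Phi$ is an isometric $*$-homomorphism. Linearity and $*$-preservation follow by choosing, for two elements $x,y\in D$, a common finite index set and corresponding $F$, and observing that $f_{x+y}$, $f_{\lambda x}$, $f_{x^*}$ then agree with $f_x+f_y$, $\lambda f_x$, $\overline{f_x}$ up to the finite set $F$. For multiplicativity one uses $(A_{\alpha_i}\setminus F)\cap(A_{\alpha_j}\setminus F)=(A_{\alpha_i}\cap A_{\alpha_j})\setminus F=\emptyset$ for $i\neq j$, so that $f_xf_y=\sum_i a_{\alpha_i}b_{\alpha_i}\chi_{A_{\alpha_i}\setminus F}$ agrees with $f_{xy}$ up to a finite set. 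Isometry is immediate from the disjointness and infinitude of the supports: $|f_x|$ takes the value $|a_{\alpha_i}|$ infinitely often and never exceeds $\max_i|a_{\alpha_i}|$, so $\|[f_x]\|_{\ell_\infty/c_0}=\limsup_k|f_x(k)|=\max_i|a_{\alpha_i}|=\|x\|_{c_0(2^\omega)}$.

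Finally, $\Phi$ is a linear isometry from the dense subspace $D$ into the Banach space $\ell_\infty/c_0$, so it extends uniquely to a linear isometry $\overline\Phi\colon c_0(2^\omega)\to\ell_\infty/c_0$; by continuity of multiplication and of the involution, $\overline\Phi$ remains multiplicative and $*$-preserving, hence is an isometric — in particular injective — $*$-homomorphism, i.e.\ the desired embedding. I do not expect any genuine obstacle beyond the bookkeeping around well-definedness already mentioned.
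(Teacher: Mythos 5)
Your argument is correct and is precisely the one the paper has in mind: the lemma is stated there with only the remark that it ``follows from the existence of an almost disjoint family of subsets of $\N$ of cardinality $2^\omega$'', and your construction --- sending the generators of $c_0(2^\omega)$ to the classes $[\chi_{A_\alpha}]$ in $\ell_\infty/c_0$, verifying the isometric $*$-homomorphism property on finitely supported elements after deleting the finite overlaps, and extending by continuity --- is the standard realisation of that remark. No gaps; the well-definedness and the computation $\|[f_x]\|=\limsup_k|f_x(k)|=\max_i|a_{\alpha_i}|$ are exactly the points that need checking, and you check them.
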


\subsection{Operators on separable Hilbert spaces}

Throughout the paper by $\ell_2$ we mean the separable complex Hilbert space consisting of 
square-summable sequences $(a_n)_{n\in\N}$ of complex numbers with the standard
 inner product $\downset{(a_n)_{n\in\N}, (b_n)_{n\in\N}} = \sum_{n=1}^\infty a_n\overline{b}_n$. 


An element $p$ of a C*-algebra $\mathcal{A}$ is a projection, if $p=p^2=p^*$. 
If $P\in \mathcal{B}(\ell_2)$, then $P$ is a projection if and only if it is an orthogonal 
projection onto a subspace of $\ell_2$. On the set of all projections $\proj(\mathcal{A})$ 
on $\mathcal{A}$ we introduce the ordering given by $p\leq q$ if and only if $qp=p$. 
For $P,R\in \proj(\mathcal{B}(\ell_2))$ we denote $P\leqK Q$, if $QP-P\in \mathcal{K}(\ell_2)$. 
Note that $P\leqK Q$ if and only if $\pi(P)\leq \pi(Q)$, where
 $\pi\colon \mathcal{B}(\ell_2)\to \QQ$ is the canonical quotient map. 
 
 \begin{definition}\label{def-fin} $Fin_1(\sqrt\Q)$ denotes the set of  all elements $v$ of $\ell_2$
 such that 
 \begin{enumerate}
 \item $\|v\|=1$
 \item $|v(n)|^2\in \Q$ for each $n\in \N$ and 
 \item $\{n\in \N: v(n)\not=0\}$ is finite.
 \end{enumerate}
 \end{definition}
 
 \begin{lemma}\label{dense} Let $n\in \N$ and 
$X_n=\{v\in \ell_2: \|v\|=1\ \&\  v(m)=0 \ \hbox{\rm for all}\  m<n\}$. 
Then $Fin_1(\sqrt\Q)\cap X_n$ is dense in $X_n$.
\end{lemma}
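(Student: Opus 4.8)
The plan is to show that every element $v \in X_n$ can be approximated in norm by elements of $Fin_1(\sqrt\Q) \cap X_n$. Fix $v \in X_n$ and $\varepsilon > 0$. Since $v \in \ell_2$, there is some $N > n$ such that $\sum_{m \geq N} |v(m)|^2 < \varepsilon^2/4$; let $w$ be the truncation of $v$ to coordinates $m < N$, so $w$ has finite support contained in $[n, N)$ and $\|v - w\| < \varepsilon/2$, while $\|w\|$ is close to $1$ (in fact $\|w\| > 1 - \varepsilon/2$ for $\varepsilon$ small, and certainly $\|w\| > 0$).

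Next I would perturb $w$ to a vector $u$ with the same (finite) support whose squared coordinate moduli are rational. Writing $w(m) = r_m e^{i\theta_m}$ in polar form for $m$ in the support, I choose rationals $q_m \geq 0$ with $q_m$ close to $r_m^2$ — close enough that $\sum_m |\,q_m - r_m^2\,|$ is as small as we like — and set $u(m) = \sqrt{q_m}\, e^{i\theta_m}$. Since the map $t \mapsto \sqrt{t}$ is uniformly continuous on bounded intervals and the support is finite, $\|u - w\|$ can be made smaller than $\varepsilon/4$, and moreover $\|u\|^2 = \sum_m q_m$ is a nonnegative rational which we may take to be within $\varepsilon/4$ of $\|w\|^2$, in particular nonzero. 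Here $u$ satisfies conditions (2) and (3) of Definition \ref{def-fin} and has support in $[n, N) \subseteq \{m : m \geq n\}$, but it need not have norm exactly $1$.

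To fix the norm, normalize: put $v' = u / \|u\|$. Then $\|v'\| = 1$, the support of $v'$ is the same finite set contained in $\{m \geq n\}$, and $|v'(m)|^2 = q_m / \|u\|^2$ is a ratio of rationals, hence rational, so $v' \in Fin_1(\sqrt\Q) \cap X_n$. It remains to estimate $\|v - v'\|$. We have $\|v - v'\| \leq \|v - w\| + \|w - u\| + \|u - v'\|$, and the first two terms are controlled by the choices above; for the last, $\|u - v'\| = \|u\|\cdot\bigl|\,1 - 1/\|u\|\,\bigr| = \bigl|\,\|u\| - 1\,\bigr|$, which is small because $\|u\|$ is close to $\|w\|$ which is close to $\|v\| = 1$. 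Collecting the estimates and choosing all the approximation parameters small enough at the start gives $\|v - v'\| < \varepsilon$, proving density.

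There is no serious obstacle here; the only mild subtlety is bookkeeping the several successive approximations so that the accumulated error stays below $\varepsilon$, and making sure at each stage that the relevant norm stays bounded away from $0$ so that normalization in the final step is legitimate and continuous. This is routine once $N$ and the rationals $q_m$ are chosen with enough room to spare.
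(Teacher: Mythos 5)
Your proof is correct: the paper states Lemma \ref{dense} without proof, treating it as routine, and your truncate--rationalize--normalize argument is precisely the standard verification intended (the key points being that $|\sqrt{a}-\sqrt{b}|\le\sqrt{|a-b|}$ lets you control the perturbation of the coordinates, and that normalizing by a rational $\|u\|^2$ keeps the squared moduli rational). The only loose end is the final bookkeeping of the accumulated error, which you correctly flag and which is resolved by choosing the parameters with room to spare.
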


\begin{definition}\label{def-very}
    A sequence $(v_n)_{n\in \N}\subseteq  Fin_1(\sqrt\Q)$ is called \textbf{very orthonormal} 
    if $\downset{v_n, v_m}=0$ for every distinct $n, m\in \N$ and
    $v_n(m)=0$ for all $m<n$.
\end{definition}

The following lemma follows from the fact that compact operators on $\ell_2$ may be approximated by finite-dimensional operators.

\begin{lemma}\label{compact_Conway}
    Let $K\in \mathcal{K}(\ell_2)$. Let $(v_n)_{n\in \N}$ be an orthonormal sequence in $\ell_2$. Then 
    $$\lim_{n\to \infty} \|K(v_n)\|=0.$$
\end{lemma}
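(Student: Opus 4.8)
The plan is to reduce to the finite-rank case via the approximation property of $\mathcal{K}(\ell_2)$, exactly as the hint preceding the statement suggests. Fix $\varepsilon>0$. Since $K$ is compact, I would first choose a finite-rank operator $F\in\mathcal{B}(\ell_2)$ with $\|K-F\|<\varepsilon/2$. Because $\|v_n\|=1$ for every $n$, this gives $\|K(v_n)\|\le\|K-F\|\,\|v_n\|+\|F(v_n)\|\le \varepsilon/2+\|F(v_n)\|$, so it suffices to show that $\|F(v_n)\|\to 0$.

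Next I would use the fact that a finite-rank operator can be written as $F=\sum_{i=1}^{m}\langle\,\cdot\,,e_i\rangle f_i$ for suitable vectors $e_1,\dots,e_m,f_1,\dots,f_m\in\ell_2$ (for instance by taking an orthonormal basis of the range of $F$ and writing each coordinate functional via the adjoint). Then $F(v_n)=\sum_{i=1}^{m}\langle v_n,e_i\rangle f_i$, hence $\|F(v_n)\|\le\sum_{i=1}^{m}|\langle v_n,e_i\rangle|\,\|f_i\|$. Orthonormality of $(v_n)_{n\in\N}$ yields Bessel's inequality $\sum_{n\in\N}|\langle v_n,e_i\rangle|^2\le\|e_i\|^2<\infty$ for each fixed $i$, so $\langle v_n,e_i\rangle\to 0$ as $n\to\infty$. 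As the sum over $i$ is finite, it follows that $\|F(v_n)\|\to 0$.

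Combining the two estimates I get $\limsup_{n\to\infty}\|K(v_n)\|\le\varepsilon/2$, and since $\varepsilon>0$ was arbitrary, $\lim_{n\to\infty}\|K(v_n)\|=0$, as claimed.

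I do not expect any real obstacle in this argument; the only point requiring a small amount of care is that the finitely many scalar sequences $(\langle v_n,e_i\rangle)_{n\in\N}$ for $i=1,\dots,m$ all have to tend to $0$ simultaneously, which is immediate precisely because there are only finitely many of them. An alternative route would be to observe that an orthonormal sequence converges weakly to $0$ and then invoke the standard fact that compact operators carry weakly convergent sequences to norm-convergent ones, but the finite-rank approximation argument above keeps everything self-contained.
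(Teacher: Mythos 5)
Your proof is correct and follows exactly the route the paper indicates: the paper offers no written proof, only the remark that the lemma ``follows from the fact that compact operators on $\ell_2$ may be approximated by finite-dimensional operators,'' and your finite-rank approximation plus Bessel's inequality argument is the standard way to carry that out.
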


\begin{lemma}\label{compact-very} Suppose that $K\in \BB(\ell_2)$ is a compact operator. 
Then there is  a function $f_K:\N\rightarrow \N$ such that  for every
 very orthonormal sequence $(v_n)_{n\in \N}$,  for
 every $n\in \N$ and every $m\geq f_K(n)$ we have 
$$\|K(v_m)\|\leq {\frac{1}{n+1}}.$$
\end{lemma}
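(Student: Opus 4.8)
The plan is to derive this lemma directly from Lemma \ref{compact_Conway} together with the following observation: the displayed conclusion only needs to hold for a \emph{fixed} countable dense set of candidate vectors, and then transfer to all very orthonormal sequences by a norm estimate. First I would fix a compact operator $K$ and, using Lemma \ref{dense}, enumerate $Fin_1(\sqrt\Q)\cap X_n$ as a countable set for each $n$; the crucial point is that a very orthonormal sequence $(v_m)_{m\in\N}$ has $v_m\in X_m$, so $v_m$ always lies in one of these countably many dense sets. The obstacle with a naive approach is that Lemma \ref{compact_Conway} gives a rate of convergence depending on the sequence $(v_n)$, whereas here we need a single function $f_K$ that works uniformly for \emph{all} very orthonormal sequences simultaneously.

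To get the uniformity, the key step is to exploit that $Fin_1(\sqrt\Q)$ is a \emph{countable} set: enumerate it as $\{w_k:k\in\N\}$. For each $n$, I claim that the set $S_n=\{k\in\N: \|K(w_k)\|> \tfrac1{n+1}\}$ must avoid arbitrarily large ``tails'' in the following sense. Consider the tails $Y_j=\{v\in\ell_2:\|v\|=1\ \&\ v(m)=0\text{ for all }m<j\}$; a very orthonormal sequence satisfies $v_m\in Y_m$, and the $v_m$ are pairwise orthogonal. Suppose toward a contradiction that for some fixed $n$ there were, for every $j$, some $w_{k}\in Y_j$ with $\|K(w_k)\|>\tfrac1{n+1}$. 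Then one could recursively build an infinite sequence $w_{k_1},w_{k_2},\dots$ of elements of $Fin_1(\sqrt\Q)$ that is pairwise orthogonal (choosing each next one from a sufficiently high tail $Y_j$, using that each $w_k$ is finitely supported) and orthonormal, with $\|K(w_{k_i})\|>\tfrac1{n+1}$ for all $i$, contradicting Lemma \ref{compact_Conway}. Hence for each $n$ there is $f_K(n)\in\N$ such that every $w\in Fin_1(\sqrt\Q)$ supported above $f_K(n)$ (i.e. $w\in Y_{f_K(n)}$) satisfies $\|K(w)\|\le\tfrac1{n+1}$.

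Finally I would check that this $f_K$ does the job: given any very orthonormal sequence $(v_m)_{m\in\N}$, any $n\in\N$, and any $m\ge f_K(n)$, we have $v_m\in Fin_1(\sqrt\Q)$ with $v_m(i)=0$ for all $i<m$, hence in particular for all $i<f_K(n)$, so $v_m\in Y_{f_K(n)}$ and therefore $\|K(v_m)\|\le\tfrac1{n+1}$, as required.

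The step I expect to be the main obstacle is the recursive construction of the pairwise orthogonal sequence in $Fin_1(\sqrt\Q)$ witnessing the contradiction: one must verify that, having chosen finitely many finitely supported unit vectors, their joint support is finite, so a tail $Y_j$ with $j$ larger than that support consists of vectors orthogonal to all of them — and that $Y_j$ still contains some $w_k$ with $\|K(w_k)\|>\tfrac1{n+1}$ under the contradiction hypothesis. This is routine but is where the hypotheses (finite supports in Definition \ref{def-fin}(3), and the ``$v_n(m)=0$ for $m<n$'' clause in Definition \ref{def-very}) are genuinely used.
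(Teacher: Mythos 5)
Your proof is correct, but it reaches the key uniform estimate by a different mechanism than the paper. The paper introduces the tail projections $Q_j$ onto $\{v\in\ell_2: v(m)=0 \hbox{ for all } m<j\}$ and observes directly that $\|KQ_j\|\to 0$ (because $K$ is a norm-limit of finite-rank operators), then chooses $f_K(n)$ so that $\|KQ_m\|\le 1/(n+1)$ for all $m\ge f_K(n)$; the conclusion is immediate from $Q_m(v_m)=v_m$. You instead establish the same vanishing of $\sup\{\|K(w)\|: w\in Fin_1(\sqrt\Q)\cap Y_j\}$ as $j\to\infty$ by contradiction, extracting unit vectors with pairwise disjoint finite supports from deeper and deeper tails $Y_j$ and invoking Lemma \ref{compact_Conway} on the resulting orthonormal sequence; that extraction is sound and correctly isolates where the finite-support clause of Definition \ref{def-fin} and the ``$v_n(m)=0$ for $m<n$'' clause of Definition \ref{def-very} are used. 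Your route is a bit longer but rests only on the already-stated Lemma \ref{compact_Conway}, whereas the paper's is a two-line direct norm computation. One misstatement should be removed: $Fin_1(\sqrt\Q)$ is \emph{not} countable, since Definition \ref{def-fin} only constrains the moduli $|v(n)|^2$ to be rational and leaves the phases arbitrary, so the set has cardinality continuum. Fortunately this is not load-bearing: your contradiction argument never uses the enumeration, only that each chosen vector is finitely supported, so the proof stands once you drop the claim that countability is the key point.
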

\begin{proof} Let $Q_n\in \BB(\ell_2)$ be the projection onto
$\{v\in \ell_2: v(m)=0 \ \hbox{\rm for all}\ m< n\}$. Since $K$ may be approximated by
 finite-dimensional operators we have $$\lim_{n\to \infty}\|KQ_n\|=0.$$ Picking
$f_K(n)\in \N$ such that $\|KQ_m\|\leq 1/(n+1)$ for all $m\geq f_K(n)$
we guarantee that $\|K(v_m)\|\leq 1/(n+1)$ as $Q_m(v_m)=v_m$.
\end{proof}

\begin{lemma}\label{projection-very} Suppose that $E\in \BB(\ell_2)$ is an infinite dimensional projection.
Then there is a very orthonormal sequence $(v_n)_{n\in \N}$ such that
$$\|E(v_n)\|\geq 1-{\frac{1}{n+1}}$$
 for  every $n\in \N$.
\end{lemma}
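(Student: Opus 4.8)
The plan is to build the very orthonormal sequence $(v_n)_{n\in\N}$ recursively, at each step using the infinite-dimensionality of $E$ to find a unit vector on which $E$ acts with norm close to $1$, and then perturbing it slightly so that it lands in $Fin_1(\sqrt\Q)$ and has the required vanishing coordinates, without spoiling the estimate or the orthogonality.

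First I would fix the spaces $X_n=\{v\in\ell_2:\|v\|=1\ \&\ v(m)=0\text{ for all }m<n\}$ from Lemma \ref{dense}, and let $Q_n$ be the orthogonal projection onto the closed subspace they span. The key analytic input is that $E$ infinite dimensional implies that $E$ restricted to each $Q_n(\ell_2)$ is still of infinite rank (since $Q_n$ has finite corank), so $\|EQ_n\|=1$ for every $n$; hence there are unit vectors $w\in X_n$ with $\|Ew\|$ as close to $1$ as we like. This replaces the role that Lemma \ref{compact-very} played for compact operators: here we want \emph{large} action rather than small.

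The recursion then goes as follows. Suppose $v_0,\dots,v_{n-1}$ have been chosen in $Fin_1(\sqrt\Q)$, mutually orthogonal, with $v_k(m)=0$ for $m<k$ and $\|Ev_k\|\ge 1-\frac1{k+1}$. Let $N>n$ be larger than the (finite) support of all of $v_0,\dots,v_{n-1}$; then any vector in $X_N$ is automatically orthogonal to each $v_k$ and has $v(m)=0$ for $m<n$. Choose $\varepsilon_n>0$ small. Using the previous paragraph pick a unit vector $w\in X_N$ with $\|Ew\|\ge 1-\varepsilon_n$; then, since $Fin_1(\sqrt\Q)\cap X_N$ is dense in $X_N$ by Lemma \ref{dense}, pick $v_n\in Fin_1(\sqrt\Q)\cap X_N$ with $\|v_n-w\|<\varepsilon_n$. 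Now $v_n$ has finite support inside $\{N,N+1,\dots\}$, so $v_n(m)=0$ for $m<n$ as required, $\downset{v_n,v_k}=0$ for $k<n$, and $\|Ev_n\|\ge\|Ew\|-\|E\|\,\|v_n-w\|\ge 1-\varepsilon_n-\varepsilon_n=1-2\varepsilon_n$. Choosing $\varepsilon_n=\tfrac1{2(n+1)}$ gives $\|Ev_n\|\ge 1-\tfrac1{n+1}$, closing the induction. The resulting sequence is very orthonormal by Definition \ref{def-very} and satisfies the conclusion.

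The only mild obstacle is verifying that $\|EQ_n\|=1$, i.e. that $E$ keeps infinite rank after compressing by a finite-corank projection; this is immediate because if $EQ_n$ had finite rank then $E=EQ_n+E(I-Q_n)$ would be a sum of two finite-rank operators, contradicting that $E$ is an infinite-dimensional projection. Everything else is routine triangle-inequality bookkeeping together with the density statement of Lemma \ref{dense}, and the care needed only lies in choosing $N$ large enough at each stage so that orthogonality and the coordinate-vanishing condition come for free.
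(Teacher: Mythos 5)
Your overall strategy coincides with the paper's: a recursion in which, at each step, you push the support beyond that of the previously chosen vectors, find a unit vector there on which $E$ acts with norm close to $1$, and then approximate it inside $Fin_1(\sqrt\Q)$ via Lemma \ref{dense}; the bookkeeping (the choice of $N$, the triangle inequality, $\varepsilon_n=\tfrac{1}{2(n+1)}$) is fine. The one step that is not correctly justified is your ``key analytic input'' $\|EQ_N\|=1$. You derive it from the fact that $EQ_N$ has infinite rank, but ``infinite rank implies norm one'' is false for a product of two projections: if $(f_k)_{k\in\N}$ is an orthonormal basis, $E$ is the projection onto the closed span of $\{f_{2n}:n\in\N\}$ and $Q$ the projection onto the closed span of $\{(f_{2n}+f_{2n+1})/\sqrt2:n\in\N\}$, then $EQ$ has infinite rank while $\|EQ\|=1/\sqrt2$. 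What actually saves you is the finite corank of $Q_N$, which you mention in a parenthesis but do not use at the point where it is needed.

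Two one-line repairs are available. Either note that $\mathrm{ran}(E)\cap\mathrm{ran}(Q_N)$ is infinite dimensional (the map $v\mapsto(v(0),\dots,v(N-1))$ restricted to the infinite-dimensional space $\mathrm{ran}(E)$ has infinite-dimensional kernel), so there is a unit vector $w\in X_N$ with $Ew=w$ and hence $\|Ew\|=1$ exactly; or argue as the paper does: take an orthonormal sequence $(e_m)_{m\in\N}$ in $\mathrm{ran}(E)$, observe that $E(I-Q_N)$ is finite rank, hence compact, so $\|E(I-Q_N)(e_m)\|\to 0$ by Lemma \ref{compact_Conway}, whence $\|EQ_N(e_m)\|=\|e_m-E(I-Q_N)(e_m)\|\to 1$, and normalize $Q_N(e_m)$ to obtain the desired $w$. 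With either repair your argument closes and is essentially the paper's proof.
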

\begin{proof} Conctruct $(v_n)_{n\in \N}$ by recursion on $n\in \N$. Suppose that
we are given $v_0, \dots, v_n$. Let $P_k\in \BB(\ell_2)$ be the projection onto
$\{v\in \ell_2: v(m)=0 \ \hbox{\rm for all}\ m\geq k\}$. Using Definition \ref{def-very} find $k\in \N$
such that $P_k(v_m)=v_m$ for all $m\leq n$. 

Let $(e_n)_{n\in \N}$ be an orthonormal sequence included in the range
of $E$. The operator $EP_k$ is compact, so by Lemma \ref{compact_Conway}  there is $m\in \N$ such that 
$$\|EP_k(e_m)\|\leq 1/3(n+2).$$
As $E=EP_k+E(I-P_k)$, we have 
$$\|E(I-P_k)(e_m)\|\geq 1-1/3(n+2)\leqno(*)$$ 
and so in particular 
$$\|(I-P_k)(e_m)\|\geq 1-1/3(n+2).\leqno (**)$$
Let 
$$v={\frac{(I-P_k)(e_m)}{\|(I-P_k)(e_m)\|}}.$$
As $v$ belongs to the range of $(I-P_k)$, use Lemma \ref{dense}
to find $v_{n+1}\in Fin_1(\sqrt\Q)$ belonging to the  range  $(I-P_k)$ such 
that $\|v_{n+1}-v\|\leq 1/3(n+2)$. By $(**)$ we obtain
$$\|v_{n+1}-(I-P_k)(e_m)\|\leq \|v_{n+1}-v\|+\|v-(I-P_k)(e_m)\|\leq$$
$$\leq 1/3(n+2)+\Big|{\frac{1}{\|(I-P_k)(e_m)\|}}-1\Big|\|(I-P_k)(e_m)\|\leq 2/3(n+2),$$
and so
$$\|E(v_{n+1}-(I-P_k)(e_m))\|\leq 2/3(n+2)\leqno(***)$$
as $\|E\|=1$.
Finally $(*)$ and $(***)$ give that 
$\|E(v_{n+2})\|\geq 1-1/(n+2)$ as required.
\end{proof}

\begin{lemma}\label{rachunki} Suppose that $P, Q\in \BB(\ell_2)$ are projections and
$v\in \ell_2$ satisfies $\|v\|=1$, $\|P(v)\|, \|Q(v)\|> 99/100$. 
Then 
\begin{enumerate}
\item $\|P(v)-v\|, \|Q(v)-v\|<0.15$, 
\item $|\langle P(v), Q(v)\rangle|>0.5$
\end{enumerate}
\end{lemma}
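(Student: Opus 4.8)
The plan is to treat both inequalities as elementary Hilbert-space estimates, using only that $P$ is a projection, $\|v\|=1$, and $\|P(v)\|>99/100$ (and symmetrically for $Q$). For part (1), I would write $v = P(v) + (I-P)(v)$, an orthogonal decomposition, so that $\|v\|^2 = \|P(v)\|^2 + \|(I-P)(v)\|^2 = 1$. Hence $\|(I-P)(v)\|^2 = 1 - \|P(v)\|^2 < 1 - (99/100)^2 = 199/10000$, so $\|(I-P)(v)\| < \sqrt{199}/100 < 15/100 = 0.15$. Since $P(v) - v = -(I-P)(v)$, this gives $\|P(v)-v\| < 0.15$, and the same computation with $Q$ in place of $P$ gives $\|Q(v)-v\| < 0.15$.

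For part (2), the idea is to compare $\langle P(v), Q(v)\rangle$ with $\langle v, v\rangle = 1$ using the bounds from part (1). Write
$$\langle P(v), Q(v)\rangle - \langle v, v\rangle = \langle P(v) - v, Q(v)\rangle + \langle v, Q(v) - v\rangle.$$
Using Cauchy--Schwarz, $\|Q(v)\| \le \|v\| = 1$, $\|v\| = 1$, and the estimates $\|P(v)-v\|, \|Q(v)-v\| < 0.15$ from part (1), the right-hand side has absolute value at most $0.15 \cdot 1 + 1 \cdot 0.15 = 0.3$. Therefore $|\langle P(v), Q(v)\rangle| \ge 1 - 0.3 = 0.7 > 0.5$, as required. (One can be slightly more careful and use the sharper bound $\sqrt{199}/100$ on each of $\|P(v)-v\|$ and $\|Q(v)-v\|$, which also yields a value comfortably above $0.5$; the crude constants suffice.)

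I do not expect any genuine obstacle here; the only point requiring a moment's care is keeping track that $v - P(v)$ lies in the range of $I-P$, which is orthogonal to the range of $P$, so that the Pythagorean identity applies and produces the bound on $\|(I-P)(v)\|$ — everything else is a direct application of Cauchy--Schwarz and the triangle inequality with the explicit numerical constants chosen generously enough to absorb the rounding.
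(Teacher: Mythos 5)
Your proof is correct and follows essentially the same route as the paper: the Pythagorean identity for part (1), and an expansion of $\langle P(v),Q(v)\rangle$ around $\langle v,v\rangle=1$ plus Cauchy--Schwarz for part (2). The only (immaterial) difference is that you telescope in two terms where the paper uses a three-term expansion with an extra cross term; both give a bound comfortably above $0.5$.
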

\begin{proof}
By the Pythagorean Theorem we have $\|P(v)-v\|^2+\|P(v)\|^2=1$, so
$\|P(v)-v\|^2<199/10000$, so $\|P(v)-v\|<0.15$ which gives (1).
For (2) consider
$$\langle P(v), Q(v)\rangle=\langle v, v\rangle+
 \langle P(v)-v, Q(v)\rangle+ \langle P(v), Q(v)-v\rangle- \langle P(v)-v, Q(v)-v\rangle$$
and use (1).
\end{proof}

\subsection{Sets}
We will need some basic facts concerning $\Delta$-systems. 

\begin{definition}\label{delta_system_def}
    We say that a family of sets $\mathcal{A}$ is a $\Delta$\textbf{-system} 
    with \textbf{root} $\Delta$, if for every $A,B\in \mathcal{A}$ we have $A\cap B=\Delta$ whenever $A\neq B$. 
\end{definition}

\begin{lemma}\label{Delta_lemma_CH}
\cite[Theorem 9.19]{jech} Assume {\sf CH}. Then for every family 
of countable sets $\mathcal{A}$ of cardinality $\omega_2$ 
there is a $\Delta$-system $\mathcal{B}\subseteq \mathcal{A}$ with $|\mathcal{B}|=\omega_2$.
\end{lemma}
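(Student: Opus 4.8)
The plan is to obtain this as the special case of the standard $\Delta$-system (sunflower) lemma in which {\sf CH} is used only through the identity $2^\omega=\omega_1$. First I would normalize the setup. Since $|\mathcal{A}|=\omega_2$ and each member of $\mathcal{A}$ is countable, $\bigcup\mathcal{A}$ has cardinality at most $\omega_2$, so after relabelling we may assume $\mathcal{A}\subseteq[\omega_2]^{\le\omega}$. Each $a\in\mathcal{A}$ is then bounded below $\omega_2$ because $\cf(\omega_2)=\omega_2>\omega$, and for every $\gamma<\omega_2$ the number of countable subsets of $\gamma$ is at most $\omega_1^\omega=2^\omega=\omega_1<\omega_2$; this inequality, and the analogous count of countable subsets of an ordinal below $\omega_2$ used later, is the only place {\sf CH} enters.

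Next I would carry out a recursion of length $\omega_2$ producing a strictly increasing continuous sequence $(\gamma_\xi)_{\xi<\omega_2}$ of ordinals below $\omega_2$, with $\gamma_0=0$, $\gamma_\xi=\sup_{\mu<\xi}\gamma_\mu$ at limit stages, together with sets $b_\xi\in\mathcal{A}$ such that $b_\xi\not\subseteq\gamma_\xi$ and $\gamma_{\xi+1}=\sup(b_\xi)+1$. Such $b_\xi$ exists since $\{a\in\mathcal{A}:a\subseteq\gamma_\xi\}$ has cardinality $<\omega_2=|\mathcal{A}|$ by the normalization step, and all the $\gamma_\xi$ remain below $\omega_2$ because $\omega_2$ is regular and each $b_\xi$ is countable. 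The $b_\xi$ are pairwise distinct, as $b_{\xi'}$ contains an element $\ge\gamma_{\xi'}>\sup(b_\xi)$ whenever $\xi<\xi'$, so $\{b_\xi:\xi<\omega_2\}$ has size $\omega_2$. Writing $h_\xi=b_\xi\cap\gamma_\xi$ (the ``head'') and $t_\xi=b_\xi\cap[\gamma_\xi,\gamma_{\xi+1})$ (the ``tail'', which is nonempty since $b_\xi\not\subseteq\gamma_\xi$ and $b_\xi\subseteq\gamma_{\xi+1}$), we have $b_\xi=h_\xi\cup t_\xi$, and for $\xi<\xi'$ the tails $t_\xi,t_{\xi'}$ lie in disjoint intervals, hence are disjoint.

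It then remains to thin the family so that the heads coincide and sit below all tails. Since $(\gamma_\xi)$ is continuous and increasing, $C=\{\xi<\omega_2:\gamma_\xi=\xi\}$ is club, so $S=\{\xi\in C:\cf(\xi)=\omega_1\}$ is stationary in $\omega_2$. For $\xi\in S$ we have $h_\xi\subseteq\xi$, and when $h_\xi\neq\emptyset$ it is a countable subset of an ordinal of cofinality $\omega_1$, hence bounded, so $\xi\mapsto\sup(h_\xi)$ is regressive on $\{\xi\in S:h_\xi\neq\emptyset\}$. If that set is nonstationary then $h_\xi=\emptyset$ for stationarily many $\xi$, and the corresponding $b_\xi=t_\xi$ already form an $\omega_2$-sized pairwise disjoint subfamily, i.e.\ a $\Delta$-system with root $\emptyset$. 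Otherwise Fodor's lemma gives a stationary $S'$ and a $\beta<\omega_2$ with $\sup(h_\xi)=\beta$, so $h_\xi\subseteq\beta+1$, for all $\xi\in S'$; since there are at most $|\beta+1|^\omega\le 2^\omega=\omega_1$ countable subsets of $\beta+1$ (here {\sf CH} again), a single value $h_\xi=h$ is attained on a subset $S''\subseteq S'$ of size $\omega_2$. For distinct $\xi<\xi'$ in $S''$ we have $\beta<\xi$, hence $h\subseteq\beta+1\subseteq\xi=\gamma_\xi$, while $t_\xi\subseteq[\xi,\omega_2)$ and $t_{\xi'}\subseteq[\xi',\omega_2)$; therefore $b_\xi\cap b_{\xi'}=h$, and $\mathcal{B}=\{b_\xi:\xi\in S''\}$ is the desired $\Delta$-system of cardinality $\omega_2$.

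The one real obstacle is controlling the intersections of the heads $h_\xi$, which a priori can behave arbitrarily across $\omega_2$ many sets; the device of arranging the $\gamma_\xi$ to have club-many fixed points is precisely what makes $\xi\mapsto\sup(h_\xi)$ regressive and lets Fodor's lemma collapse the heads into a bounded set, after which {\sf CH}, in the form $2^\omega=\omega_1$, supplies the pigeonhole that forces a single common head against the $\omega_2$ sets. (This is of course just a particular instance of the general $\Delta$-system lemma; see \cite{jech}.)
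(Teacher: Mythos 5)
Your argument is correct. The paper offers no proof of this lemma, citing it as \cite[Theorem 9.19]{jech}, and what you have written is precisely the standard proof of that cited theorem specialized to the present parameters: normalize into $[\omega_2]^{\le\omega}$, build the continuous sequence $(\gamma_\xi)$ separating heads from pairwise disjoint tails, restrict to the stationary set of fixed points of cofinality $\omega_1$ so that $\xi\mapsto\sup(h_\xi)$ is regressive, apply Fodor, and use {\sf CH} (via $\omega_1^{\omega}=\omega_1<\omega_2$) to pigeonhole a common root.
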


\begin{lemma}\label{RcapS}
Assume {\sf CH}\footnote{In  fact, we do not need to assume {\sf CH} here because we may use a weak version
of the $\Delta$-system like in \cite{witek} Lemma  4.12 which is true without any additional hypothesis.
  Since anyway we will need {\sf CH} for other purposes, we present the standard argument assuming {\sf CH}.}. 
  Suppose $(S_\alpha)_{\alpha<\omega_2}$ is
 a sequence of pairwise disjoint countable subsets of $\omega_2$ 
 and $(R_\alpha)_{\alpha<\omega_2}$ is a sequence of countable 
 subsets of $\omega_2$. Then there are $\xi, \eta<\omega_2$ such that $\xi\neq \eta$ and 
$$S_\xi\cap R_\eta = S_\eta\cap R_\xi = \varnothing.$$
\end{lemma}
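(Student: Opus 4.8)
The plan is to use a counting/pigeonhole argument combined with Lemma~\ref{Delta_lemma_CH}. First I would reduce to the case where the sets $S_\alpha$ and $R_\alpha$ are all subsets of a common ordinal; since each is a countable subset of $\omega_2$ and there are $\omega_2$ of them, the union $\bigcup_{\alpha<\omega_2}(S_\alpha\cup R_\alpha)$ has cardinality at most $\omega_2$, so no reduction in the index set is lost, but this lets me speak of a single ground set. The key step is to apply the $\Delta$-system lemma (Lemma~\ref{Delta_lemma_CH}) to the family $\{S_\alpha\cup R_\alpha : \alpha<\omega_2\}$ of countable sets to obtain an index set $I\subseteq\omega_2$ with $|I|=\omega_2$ such that $\{S_\alpha\cup R_\alpha : \alpha\in I\}$ forms a $\Delta$-system with some countable root $\Delta$.

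Next I would exploit disjointness of the $S_\alpha$'s to control the root. Because the $S_\alpha$ are pairwise disjoint, at most countably many of them can meet the fixed countable set $\Delta$ nontrivially --- more precisely, for each $\delta\in\Delta$ there is at most one $\alpha$ with $\delta\in S_\alpha$, so $\{\alpha\in I : S_\alpha\cap\Delta\neq\varnothing\}$ has cardinality at most $|\Delta|\le\omega$. Discarding these, I obtain $I'\subseteq I$ with $|I'|=\omega_2$ and $S_\alpha\cap\Delta=\varnothing$ for all $\alpha\in I'$. Now pick any two distinct $\xi,\eta\in I'$. For these indices, since $(S_\xi\cup R_\xi)\cap(S_\eta\cup R_\eta)=\Delta$, we get in particular $S_\xi\cap R_\eta\subseteq(S_\xi\cup R_\xi)\cap(S_\eta\cup R_\eta)=\Delta$, hence $S_\xi\cap R_\eta\subseteq S_\xi\cap\Delta=\varnothing$, and symmetrically $S_\eta\cap R_\xi=\varnothing$. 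This is exactly what is required.

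I do not expect a serious obstacle here: the only point that needs a moment of care is the observation that pairwise disjointness of the $S_\alpha$'s forces all but countably many of them to be disjoint from the countable root, which is why the argument works even though a priori the root could interact badly with the $R_\alpha$'s. (In the footnote's alternative route avoiding {\sf CH}, one would instead invoke a weak $\Delta$-system as in \cite{witek}, but since {\sf CH} is assumed we use the clean statement of Lemma~\ref{Delta_lemma_CH}.) Everything else is routine bookkeeping with cardinalities.
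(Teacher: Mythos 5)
Your argument is correct, and while it rests on the same engine as the paper's proof (the {\sf CH} $\Delta$-system lemma plus the observation that pairwise disjoint sets can meet a fixed countable set for only countably many indices), the decomposition is genuinely different and in fact a bit slicker. The paper applies Lemma~\ref{Delta_lemma_CH} only to the family $(R_\alpha)_{\alpha<\omega_2}$ and then needs three successive steps: discard the countably many $\xi$ with $S_\xi\cap\Delta\neq\varnothing$, fix one $\xi$ and discard the $\eta$ with $S_\eta\cap R_\xi\neq\varnothing$, and finally use the pairwise disjointness of the sets $R_\eta\setminus\Delta$ to locate a single partner $\eta$ with $S_\xi\cap(R_\eta\setminus\Delta)=\varnothing$. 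By running the $\Delta$-system lemma on the unions $S_\alpha\cup R_\alpha$ instead, you get away with a single thinning and obtain the stronger conclusion that \emph{every} pair of distinct indices from the final $\omega_2$-sized set $I'$ works, which the paper's argument does not literally give. One point of polish: Lemma~\ref{Delta_lemma_CH} is stated for a \emph{family} of $\omega_2$ countable sets, and Definition~\ref{delta_system_def} only constrains intersections of \emph{distinct} members, so your identity $(S_\xi\cup R_\xi)\cap(S_\eta\cup R_\eta)=\Delta$ presupposes $S_\xi\cup R_\xi\neq S_\eta\cup R_\eta$. This is harmless: if $\omega_2$ many indices shared the same union $T$, then the $S_\alpha$ for those indices would be pairwise disjoint subsets of the countable set $T$, hence all but countably many would be empty and any two such indices would satisfy the conclusion trivially; otherwise one passes to an injectively indexed subfamily of size $\omega_2$ before invoking the lemma. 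The paper's own proof glosses over the analogous degenerate case for the $R_\alpha$'s, so this is bookkeeping rather than a gap.
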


\begin{proof}
    By Lemma \ref{Delta_lemma_CH} there is $A\in[\omega_2]^{\omega_2}$ 
    such that $(R_\xi)_{\xi\in A}$ is a $\Delta$-system with root $\Delta$. 
    The set $\Delta$ is countable, so there is $B\in[A]^{\omega_2}$ 
    such that $S_\xi\cap \Delta=\varnothing$ for $\xi\in B$. 
    Pick any $\xi \in B$. Since $R_\xi$ is countable, 
    there is $C\in[B]^{\omega_2}$ such that 
    $S_\eta\cap R_\xi = \varnothing$ for all $\eta\in C$. 
    Since $S_\xi$ is countable and the sets $(R_\eta\backslash \Delta)_{\eta \in C}$ 
    are pairwise disjoint, we can pick 
    $\eta\in C\backslash\{\xi\}$ such that $S_\xi \cap (R_\eta \backslash \Delta) = \varnothing$.
     It follows that $S_\xi\cap R_\eta = \varnothing$.
\end{proof}

\begin{lemma}\label{functions}
Suppose $A_n \in [\omega_2]^{\omega_2}, S_{n,\alpha} \in [\omega_2]^\omega$
 for $n\in\N, \alpha<\omega_2$ are such that for every $n\in \N$ 
 the family $(S_{n,\alpha})_{\alpha\in A_n}$ is a $\Delta$-system with 
 root $\Delta_n$. Assume that for each $\alpha \in A_n$ we have 
 $\Delta\cap S_{n,\alpha}=\Delta_n$, where $\Delta= \bigcup_{n\in \N} \Delta_n$. 
 Then for every $\xi < \omega_2$ there is $\gamma_\xi \in \omega_2^\N$ such that 
\begin{enumerate}[label=(\alph*)]
    \item $\gamma_\xi(n)\in A_n$ for $n\in \N$,
    \item for distinct $(\xi,n),(\eta,m)\in\omega_2\times\N$  we have
    $$(S_{n,\gamma_\xi(n)}\backslash \Delta_n)\cap (S_{m,\gamma_\eta(m)}\backslash \Delta_m)=\varnothing,$$
    \item $\gamma_\xi\cap \gamma_\eta = \varnothing$ for $\xi, \eta<\omega_2, \xi\neq \eta$.
\end{enumerate}
\end{lemma}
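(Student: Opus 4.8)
The plan is to build the functions $\gamma_\xi$ by a transfinite recursion on $\xi<\omega_2$, maintaining at each stage the invariants (a), (b) and (c). The point of the hypothesis that the roots $\Delta_n$ are ``absorbed'' into $\Delta=\bigcup_n\Delta_n$, together with the fact that each $(S_{n,\alpha})_{\alpha\in A_n}$ is a $\Delta$-system, is that the ``tails'' $S_{n,\alpha}\setminus\Delta_n$ for $\alpha\in A_n$ are pairwise disjoint; so choosing $\gamma_\xi(n)$ amounts to choosing, inside the big index set $A_n$, one more tail avoiding a prescribed countable set of ordinals.

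First I would fix $\xi<\omega_2$ and suppose $\gamma_\eta$ has been defined for all $\eta<\xi$ so that (a)--(c) hold among them. The set $T_\xi=\bigcup_{\eta<\xi}\,\bigcup_{m\in\N}\bigl(S_{m,\gamma_\eta(m)}\setminus\Delta_m\bigr)$ together with $\bigcup_{\eta<\xi}\operatorname{ran}(\gamma_\eta)$ is a union of $|\xi|\le\omega_1<\omega_2$ countable sets, hence has size at most $\omega_1$. I then construct $\gamma_\xi(n)$ by an internal recursion on $n\in\N$: having chosen $\gamma_\xi(0),\dots,\gamma_\xi(n-1)$, I must pick $\gamma_\xi(n)\in A_n$ with $S_{n,\gamma_\xi(n)}\setminus\Delta_n$ disjoint from $T_\xi\cup\bigcup_{k<n}(S_{k,\gamma_\xi(k)}\setminus\Delta_k)$ — a countable-plus-$\omega_1$-sized set — and, to secure (c), with $\gamma_\xi(n)\notin\bigcup_{\eta<\xi}\operatorname{ran}(\gamma_\eta)\cup\{\gamma_\xi(k):k<n\}$, again a set of size $<\omega_2$. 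Since $(S_{n,\alpha}\setminus\Delta_n)_{\alpha\in A_n}$ are pairwise disjoint and $|A_n|=\omega_2$, only $<\omega_2$ of these tails can meet a fixed set of size $<\omega_2$, so all but $<\omega_2$ choices of $\alpha\in A_n$ work; in particular a valid $\gamma_\xi(n)$ exists. This simultaneously gives (a) and the ``same $\xi$'' instances of (b), and the extra condition gives the ``$k<n$'' diagonal part of (c).

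It remains to see that (b) holds across distinct first coordinates, i.e.\ for $\xi\ne\eta$; but if, say, $\eta<\xi$, then by construction $S_{n,\gamma_\xi(n)}\setminus\Delta_n$ was chosen disjoint from $T_\xi\supseteq S_{m,\gamma_\eta(m)}\setminus\Delta_m$ for every $m$, which is exactly the required disjointness; and (c) for $\eta<\xi$ holds because $\gamma_\xi(n)$ was kept out of $\operatorname{ran}(\gamma_\eta)$ for all $n$. The main thing to get right is the bookkeeping of cardinalities — each forbidden set accumulated at stage $\xi$ must stay of size $<\omega_2$, which is where {\sf CH} (giving $2^\omega=\omega_1$, so that countable unions over $\xi<\omega_2$ of countable sets have size $\le\omega_1$) is used — and the observation that disjointness of the tails $S_{n,\alpha}\setminus\Delta_n$ across $\alpha\in A_n$ follows from the $\Delta$-system property; no serious obstacle beyond this arises.
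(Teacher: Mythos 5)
Your proposal is correct and is essentially the paper's own argument: a recursion on the pairs $(\xi,n)$ (ordered lexicographically) in which each $\gamma_\xi(n)$ is chosen from $A_n$ so that the tail $S_{n,\gamma_\xi(n)}\setminus\Delta_n$ avoids the union of all previously used tails (a set of size at most $\omega_1$), using that the tails indexed by $A_n$ are pairwise disjoint and $|A_n|=\omega_2$; the paper secures (c) by choosing the new ordinal above the supremum of all previously chosen ones, while you exclude a set of indices of size $<\omega_2$, which is an inessential variation. One small remark: {\sf CH} is not actually needed here (and the lemma does not assume it) --- the bound you need is only that a union of at most $\omega_1$ countable sets has cardinality at most $\omega_1<\omega_2$, which holds in {\sf ZFC}.
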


\begin{proof}
We construct $(\gamma_\xi)_{\xi<\omega_2}$ by induction on $\xi<\omega_2$
 and $n\in\N$. Fix $\xi<\omega_2$ and $n\in \N$. Suppose that
 we have constructed $\gamma_\eta$ for $\eta<\xi$ and $\gamma_\xi(m)$ for $m<n$. 
 Put $\delta=\sup\limits_{\eta<\xi, n\in\N} \gamma_{\eta}(n)$ and observe that $|A_n\backslash (\delta+1)|=\omega_2$.
  The set 
\begin{gather*}
    B=\bigcup_{(\eta,m) <_{lex} (\xi,n)} S_{m,\gamma_\eta(m)}\backslash\Delta
\end{gather*}
has cardinality at most $\omega_1$ and the family $(S_{n,\alpha}\backslash\Delta)_{\alpha<\omega_2}$ 
consists of non-empty pairwise disjoint sets, 
so there is $\beta\in A_n\backslash (\delta+1)$
 such that $S_{n,\beta}\cap B = \varnothing$. 
 We put $\gamma_\xi(n)=\beta$. 
 It follows directly from the construction that the conditions (a)-(c) are satisfied. 
\end{proof}

\subsection{The Cohen model}
By  $\PP$ 
we will denote the Cohen forcing adding $\omega_2$ reals i.e. 
$$\PP=\{p\colon \dom(p)\to \{0,1\} : \dom(p)\in [\omega_2]^{<\omega} \}$$
with the ordering given by $q\leq p$ if and only if $q\supseteq p$. 
The generic extension of a model of {\sf CH} obtained by
forcing with  $\PP$ is called the Cohen model.

\begin{definition}
    Let $\sigma\colon \omega_2 \to \omega_2$ be a permutation. 
    We define the \textbf{lift} $\sigma^\PP$ of $\sigma$ as the automorphism
     $\sigma^\PP\colon\PP\to \PP$ given by $\sigma^\PP(p)(\sigma(\alpha))= p(\alpha)$ for $p\in \PP, \alpha\in \dom(p)$. 
     If $\dot{x}=\{(\dot{y}_i, p_i): i\in I\}$ is a 
    $\PP$-name, then we define
    $$\sigma^{\dot\PP}(\dot{x})= \{(\sigma^{\dot\PP}(\dot{y}_i), \sigma(p_i)): i\in I\}$$
     (cf. \cite[p. 221]{jech}). 
\end{definition}

Note that if $\check{x}$ is the canonical name for and object $x$ of the ground model, 
then for any permutation $\sigma\colon \omega_2\to \omega_2$ we have $\sigma^{\dot\PP}(\check{x})=\check{x}$.

\begin{definition}
    Let $\sigma\colon \omega_2\to \omega_2$ be a permutation such 
    that $\sigma=\sigma^{-1}$, $\sigma[S_1]=S_2$, where $S_1, S_2 \subseteq \omega_2$. 
    We say that $p\in \PP$ is $(\sigma, S_1, S_2)$\textbf{-symmetric}, 
    if $\sigma^\PP(p\upharpoonright S_1)= p\upharpoonright S_2$. 
\end{definition}

\begin{lemma}\label{symmetric-symmetric}
 Let $\sigma\colon \omega_2\to \omega_2$ be a permutation such 
    that $\sigma=\sigma^{-1}$, $\sigma[S_1]=S_2$, where $S_1, S_2 \subseteq \omega_2$. 
    $p\in \PP$ is $(\sigma, S_1, S_2)$-symmetric if and only if 
    $p$ is $(\sigma, S_2, S_1)$-symmetric
\end{lemma}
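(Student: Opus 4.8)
The plan is to exploit that the lift operation is an involution whenever $\sigma=\sigma^{-1}$, and to deduce the equivalence by applying $\sigma^\PP$ to both sides of the defining equation.

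First I would check that $\sigma^\PP\circ\sigma^\PP=Id_\PP$. For $q\in\PP$ one has $\dom(\sigma^\PP(q))=\sigma[\dom(q)]$ and, by definition of the lift, $\sigma^\PP(q)(\sigma(\alpha))=q(\alpha)$ for every $\alpha\in\dom(q)$. Applying the lift a second time and using $\sigma\circ\sigma=Id_{\omega_2}$ (which is exactly the hypothesis $\sigma=\sigma^{-1}$) to identify $\sigma[\sigma[\dom(q)]]$ with $\dom(q)$, a direct computation yields $\sigma^\PP(\sigma^\PP(q))(\gamma)=q(\gamma)$ for all $\gamma\in\dom(q)=\dom(\sigma^\PP(\sigma^\PP(q)))$, hence $\sigma^\PP\circ\sigma^\PP=Id_\PP$. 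I would also record that $\sigma=\sigma^{-1}$ together with $\sigma[S_1]=S_2$ gives $\sigma[S_2]=S_1$, so that the notion of $(\sigma,S_2,S_1)$-symmetry is meaningful.

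Next, suppose $p$ is $(\sigma,S_1,S_2)$-symmetric, that is, $\sigma^\PP(p\upharpoonright S_1)=p\upharpoonright S_2$. Since $\sigma^\PP$ is a well-defined function on $\PP$, applying it to both sides gives $\sigma^\PP(\sigma^\PP(p\upharpoonright S_1))=\sigma^\PP(p\upharpoonright S_2)$, and the first step rewrites the left-hand side as $p\upharpoonright S_1$. Thus $p\upharpoonright S_1=\sigma^\PP(p\upharpoonright S_2)$, which is precisely the statement that $p$ is $(\sigma,S_2,S_1)$-symmetric. The converse implication follows by the same argument with the roles of $S_1$ and $S_2$ interchanged (legitimate since $\sigma[S_2]=S_1$), completing the proof.

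The only point requiring any care is the bookkeeping of domains in the verification that $\sigma^\PP\circ\sigma^\PP=Id_\PP$; the rest of the argument is purely formal.
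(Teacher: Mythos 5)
Your proposal is correct and follows exactly the paper's argument: the paper's one-line proof is precisely the observation that $\sigma=\sigma^{-1}$ forces $(\sigma^\PP)^2=Id_\PP$, and you simply spell out the domain bookkeeping and the application of $\sigma^\PP$ to both sides of the defining equation. Nothing to add.
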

\begin{proof} Note that the condition $\sigma=\sigma^{-1}$ implies
that $(\sigma^\PP)^2$ is the identity on $\PP$.
\end{proof}

\begin{lemma}\label{symmetric-condition}
    Let $S_1, S_2\subseteq \omega_2$ and suppose that $\sigma\colon \omega_2\to \omega_2$
     is a permutation such that $\sigma=\sigma^{-1}$,  $\sigma[S_1]=S_2$ and $\sigma\upharpoonright 
     S_1\cap S_2=Id_{S_1\cap S_2}$. 
     Suppose $p\in \PP$ is $(\sigma, S_1, S_2)$-symmetric and $q\leq p$ is such
      that $(\dom(q)\backslash \dom(p))\cap S_2 \subseteq S_1$. Let $$r=q\cup \sigma(q\upharpoonright S_1).$$
    Then $r\in\PP$ is  $(\sigma, S_1, S_2)$-symmetric. 
\end{lemma}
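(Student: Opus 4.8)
The plan is to check directly that $r$ is a well-defined finite partial function $\omega_2\to\{0,1\}$, hence a condition in $\PP$, and then that it satisfies $\sigma^\PP(r\upharpoonright S_1)=r\upharpoonright S_2$. In both steps I would split coordinates according to whether they already lie in $\dom(p)$ or are ``new'', i.e.\ in $\dom(q)\setminus\dom(p)$, using the symmetry of $p$ (and Lemma~\ref{symmetric-symmetric}) in the first case and the two extra hypotheses on $q$ and on $\sigma\upharpoonright S_1\cap S_2$ in the second.

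For well-definedness, write $s=\sigma^\PP(q\upharpoonright S_1)$ (this is the object denoted $\sigma(q\upharpoonright S_1)$ in the statement); its domain is $\sigma[\dom(q)\cap S_1]\subseteq S_2$, and it is finite with values in $\{0,1\}$, so the only point is that $q$ and $s$ agree on $\dom(q)\cap\dom(s)$. Suppose $\beta\in\dom(q)\cap\dom(s)$, say $\beta=\sigma(\alpha)$ with $\alpha\in\dom(q)\cap S_1$; since $\sigma$ is an involution $\alpha=\sigma(\beta)$ and $\beta\in\sigma[S_1]=S_2$, and $s(\beta)=q(\alpha)$, so I must show $q(\alpha)=q(\beta)$. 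If $\beta\notin\dom(p)$, then $\beta\in(\dom(q)\setminus\dom(p))\cap S_2\subseteq S_1$, hence $\beta\in S_1\cap S_2$ and $\alpha=\sigma(\beta)=\beta$ by the hypothesis $\sigma\upharpoonright S_1\cap S_2=Id_{S_1\cap S_2}$, so the equality is trivial. If $\beta\in\dom(p)$, then $\beta\in\dom(p)\cap S_2$; by Lemma~\ref{symmetric-symmetric} $p$ is $(\sigma,S_2,S_1)$-symmetric, i.e.\ $\sigma^\PP(p\upharpoonright S_2)=p\upharpoonright S_1$, which forces $\alpha=\sigma(\beta)\in\dom(p)\cap S_1$ and $p(\alpha)=p(\beta)$; since $q\supseteq p$ this gives $q(\alpha)=q(\beta)$. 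Thus $r=q\cup s\in\PP$, and $r\leq q\leq p$.

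For symmetry, the key observation is that $s$ introduces no coordinate in $S_1$ beyond those already in $\dom(q)$: if $\sigma(\alpha)\in S_1$ for some $\alpha\in\dom(q)\cap S_1$, then $\sigma(\alpha)\in S_1\cap S_2$, whence $\sigma(\alpha)=\alpha$. Hence $\dom(r)\cap S_1=\dom(q)\cap S_1$, so $r\upharpoonright S_1=q\upharpoonright S_1$ and $\sigma^\PP(r\upharpoonright S_1)=s$, with domain $\sigma[\dom(q)\cap S_1]$. On the other side $\dom(r\upharpoonright S_2)=(\dom(q)\cap S_2)\cup\sigma[\dom(q)\cap S_1]$, and the same case split shows $\dom(q)\cap S_2\subseteq\sigma[\dom(q)\cap S_1]$: for $\beta\in\dom(q)\cap S_2$ not in $\dom(p)$ one gets $\beta\in S_1\cap S_2$ and $\sigma(\beta)=\beta\in\dom(q)\cap S_1$, while for $\beta\in\dom(p)\cap S_2$ the symmetry of $p$ gives $\sigma(\beta)\in\dom(p)\cap S_1\subseteq\dom(q)\cap S_1$. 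So $\sigma^\PP(r\upharpoonright S_1)$ and $r\upharpoonright S_2$ have the same domain, and on it they agree: for $\beta=\sigma(\alpha)$ with $\alpha\in\dom(q)\cap S_1$ one has $\sigma^\PP(r\upharpoonright S_1)(\beta)=q(\alpha)=s(\beta)=r(\beta)$, the last equality by the well-definedness of $r$ established above. This gives $\sigma^\PP(r\upharpoonright S_1)=r\upharpoonright S_2$, as required.

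I expect the only delicate point to be the consistency check that $r=q\cup s$ is a function. A priori $q$ and $s$ could conflict, but the two hypotheses $(\dom(q)\setminus\dom(p))\cap S_2\subseteq S_1$ and $\sigma\upharpoonright S_1\cap S_2=Id_{S_1\cap S_2}$ are precisely what forces every overlap outside $\dom(p)$ to occur at a $\sigma$-fixed point (so there is nothing to check there), while the assumed symmetry of $p$, via Lemma~\ref{symmetric-symmetric}, disposes of the overlaps inside $\dom(p)$. Once that is in place, the verification that $r$ is $(\sigma,S_1,S_2)$-symmetric is just bookkeeping of domains under the involution $\sigma$.
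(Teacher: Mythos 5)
Your proposal is correct and follows essentially the same route as the paper: well-definedness of $r=q\cup\sigma^\PP(q\upharpoonright S_1)$ is checked by splitting the overlap into the part outside $\dom(p)$ (forced into $S_1\cap S_2$, hence $\sigma$-fixed) and the part inside $\dom(p)$ (handled by the symmetry of $p$ via Lemma~\ref{symmetric-symmetric}), and the symmetry of $r$ is then a matter of matching domains and values under the involution. Your version merely spells out the domain bookkeeping (e.g.\ $\dom(r)\cap S_1=\dom(q)\cap S_1$ and $\dom(q)\cap S_2\subseteq\sigma[\dom(q)\cap S_1]$) more explicitly than the paper does.
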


\begin{proof}
First, note that $r\in \PP$ since if 
$$\alpha\in \dom(q)\cap \dom(\sigma(q\upharpoonright S_1))=
\big((\dom(q)\backslash \dom(p))\cap S_2\big)\cup dom(p)\cap S_2,$$
 then either $\alpha\in S_1\cap S_2$ (and $r(\alpha)$
  is well-defined by the hypothesis that $\sigma\upharpoonright S_1\cap S_2=Id_{S_1\cap S_2}$) 
  or $\alpha\in \dom(p)\cap S_2$, and so $r(\alpha)$ is well-defined by the symmetry of $p$.

 Now we will show that $r$ is $(\sigma, S_1, S_2)$-symmetric. 
 It is clear from the definition and from the hypothesis that
 $\sigma[S_1]=S_2$ that $\supp(\sigma(r\upharpoonright S_1))=\supp(r \upharpoonright S_2)$.
  Fix $\alpha\in \dom(r)\cap S_1$. If $\alpha\in\dom(p)$, then $\sigma(r)(\alpha)=
   r(\sigma(\alpha))$ by the symmetry of $p$. If $\alpha\in \dom(q)\backslash \dom(p)$, 
   then we have $\sigma(r)(\alpha)= r(\sigma(\alpha))$ from the equality defining $r$.
\end{proof}

Let us recall the definition of a nice name.
\begin{definition}
A $\PP$-name $\dot{X}$ is a \textbf{nice name} for a subset of $M\in V$, 
if it is of the form $\dot{X}=\bigcup_{m\in M} \{\check{m}\}\times A_m$,
 where $A_m$ is an antichain in $\PP$ for each $m\in M$. 
 The set $\bigcup_{m\in M} \bigcup_{p\in A_m} \dom(p)$ is called 
 the \textbf{support} of $\dot{X}$ and is denoted by $\supp(\dot{X})$. 
\end{definition}

Note that since $\PP$ satisfies c.c.c. every nice name for a subset of $\N$ has a countable support.

\begin{lemma}\label{small-domain}
    Suppose $\dot x$ is a nice $\PP$-name with $\supp(\dot x)=S$ and $p\in\PP$
     forces that $\dot x$ is an element of the ground model. 
     Then there is $q\leq p$ with $\dom(q)\backslash \dom(p)\subseteq S$ that decides $\dot x$. 
\end{lemma}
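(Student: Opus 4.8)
The plan is to exploit the product structure of Cohen forcing. Write $S^{c}=\omega_{2}\setminus S$ and, for $A\subseteq\omega_{2}$, put $\PP_{A}=\{r\in\PP:\dom(r)\subseteq A\}$, so that $\PP$ is canonically isomorphic to the product $\PP_{S}\times\PP_{S^{c}}$ via $r\mapsto(r\upharpoonright S,\,r\upharpoonright S^{c})$. The point is that, since $\dot x$ is a nice name and every condition occurring (hereditarily) in it has domain contained in $\supp(\dot x)=S$, the name $\dot x$ is in fact a $\PP_{S}$-name. The one nontrivial ingredient I would use is the standard product-forcing fact: for a $\PP_{S}$-name $\dot x$, a formula $\varphi$ with parameters in $V$ mentioning $\dot x$, and any $r\in\PP$, one has $r\Vdash_{\PP}\varphi(\dot x)$ if and only if $r\upharpoonright S\Vdash_{\PP_{S}}\varphi(\dot x)$. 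One direction follows from $\dot x^{G}=\dot x^{G\cap\PP_{S}}$ for every $\PP$-generic $G$; the other by a glueing argument, namely that a $\PP_{S}$-condition $t\le r\upharpoonright S$ refuting $\varphi(\dot x)$ can be combined with $r\upharpoonright S^{c}$, using disjointness of domains, into a $\PP$-condition below $r$ that also refutes $\varphi(\dot x)$.

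Granting this, the argument is short. First, since $p\Vdash_{\PP}\dot x\in V$, a routine density argument (or: fix a $\PP$-generic $G\ni p$, let $y=\dot x^{G}\in V$, and take $q_{0}\in G$ below $p$ with $q_{0}\Vdash_{\PP}\dot x=\check y$) produces $q_{0}\le p$ and $y\in V$ with $q_{0}\Vdash_{\PP}\dot x=\check y$. Applying the product-forcing fact to the formula $\dot x=\check y$ gives $q_{0}\upharpoonright S\Vdash_{\PP_{S}}\dot x=\check y$, hence also $q_{0}\upharpoonright S\Vdash_{\PP}\dot x=\check y$ (the easy direction). In other words, the condition $q_{0}\upharpoonright S$, viewed in $\PP$, already decides $\dot x$; the only issue is that it need not extend $p$.

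To fix this I would set $q=(q_{0}\upharpoonright S)\cup(p\upharpoonright S^{c})$, which is a well-defined element of $\PP$ because the two partial functions have disjoint domains. Since $q_{0}\le p$ gives $q_{0}\upharpoonright S\supseteq p\upharpoonright S$, the function $q$ extends $p$, so $q\le p$; and $q\supseteq q_{0}\upharpoonright S$, so $q\le q_{0}\upharpoonright S$ and therefore $q\Vdash_{\PP}\dot x=\check y$, i.e.\ $q$ decides $\dot x$. Finally $\dom(q)\setminus\dom(p)=\dom(q_{0}\upharpoonright S)\setminus\dom(p)\subseteq S$, because $\dom(p\upharpoonright S^{c})\subseteq\dom(p)$. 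Thus $q$ is as required. The only place demanding care is the product-forcing fact itself — keeping track of which coordinates each condition controls and noting that restricting a condition to $S$ does not change what it forces about $\dot x$ — but this is entirely standard; everything else is bookkeeping with finite partial functions.
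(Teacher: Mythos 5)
Your proof is correct; the paper states Lemma \ref{small-domain} without proof as a standard fact about Cohen forcing, and your argument via the decomposition $\PP\cong\PP_S\times\PP_{S^c}$, restriction of $q_0$ to $S$, and regluing with $p\upharpoonright S^c$ is exactly the expected one. The only imprecision is that the ``product-forcing fact'' as you state it --- $r\Vdash_\PP\varphi(\dot x)$ iff $r\upharpoonright S\Vdash_{\PP_S}\varphi(\dot x)$ for an arbitrary formula $\varphi$ --- is false in general, since $V[G]$ and $V[G\cap\PP_S]$ are different models; but the only instance you use is the absolute formula $\dot x=\check y$, for which both directions go through exactly as you describe, so no actual gap results.
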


\begin{lemma}\label{remarkkkkkkk}
    If $\dot{h}$ is a nice name such that $$\PP\forces \dot{h}\colon \N\to \N \ \text{is a function},$$
     then for every $p\in\PP$ and $n\in\N$ there is $q\leq p$ and $m\in \N$ such that 
    $$q\forces \dot{h}(\check n)=\check m$$ and $\dom(q)\backslash \dom(p)\subseteq \supp(\dot{h})$.
\end{lemma}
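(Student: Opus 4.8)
The plan is to combine Lemma~\ref{small-domain} with a density argument. Since $\dot h$ is a nice name and $\PP$ forces that $\dot h\colon\N\to\N$ is a function, for each fixed $n\in\N$ the name
$$\dot h(\check n)=\{(\check m, q): q\in B_m\}$$
records an assignment of a natural number to $n$; formally, for each $n$ the set $\{q\in\PP: q \text{ decides } \dot h(\check n)\}$ is dense below any condition, because $\PP$ forces $\dot h(\check n)$ to be a (ground-model) natural number. First I would apply Lemma~\ref{small-domain} in the following way: let $\dot x$ be the name $\dot h(\check n)$, which is (equivalent to) a nice name whose support is contained in $\supp(\dot h)$, and which is forced by $\mathbbm{1}_\PP$ (hence by $p$) to be an element of the ground model, namely an element of $\N$. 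Then Lemma~\ref{small-domain} yields $q\le p$ with $\dom(q)\setminus\dom(p)\subseteq\supp(\dot h)$ deciding $\dot x$, i.e.\ there is $m\in\N$ with $q\forces \dot h(\check n)=\check m$, which is exactly the conclusion.

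The one point that needs a little care is to check that $\dot h(\check n)$ really is (or can be replaced by) a nice name whose support sits inside $\supp(\dot h)$. Here I would argue as follows: given the nice name $\dot h=\bigcup_{\langle k,m\rangle}\{(\langle \check k,\check m\rangle^{\cdot}, p)\}$-style presentation (a nice name for a subset of $\N\times\N$, coding a function), one reads off a nice name $\dot y$ for a subset of $\N$ by setting, for each $m\in\N$, the antichain $A_m$ to be a maximal antichain among conditions forcing $\langle\check n,\check m\rangle\in\dot h$. Every such condition is compatible with (indeed refines, up to compatibility) an element of the original antichains of $\dot h$, so $\dom(p)\subseteq\supp(\dot h)$ for every $p$ appearing in some $A_m$; hence $\supp(\dot y)\subseteq\supp(\dot h)$. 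And $\PP$ forces $\dot y$ to be a singleton, in particular an element of the ground model, so $p$ forces this too. Applying Lemma~\ref{small-domain} to $\dot y$ and $p$ gives $q\le p$ with $\dom(q)\setminus\dom(p)\subseteq\supp(\dot y)\subseteq\supp(\dot h)$ deciding $\dot y$; decoding, $q$ forces $\dot h(\check n)$ to equal the unique element of $\dot y$, call it $m$.

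The main obstacle, such as it is, is purely bookkeeping: making the passage from a nice name for (the graph of) a function $\N\to\N$ to a nice name for the value at a fixed argument fully precise, and verifying the support inclusion through that passage. There is no genuine combinatorial difficulty, since all of the real content is already packaged in Lemma~\ref{small-domain}; the present lemma is essentially the specialization of that lemma to the case of a single value of a name for a function. One could also prove it directly without invoking Lemma~\ref{small-domain}: given $p$ and $n$, use a maximal antichain $A$ below $p$ of conditions deciding $\dot h(\check n)$, note by c.c.c.\ that $A$ is countable and (by the structure of nice names) that we may take each $r\in A$ to satisfy $\dom(r)\setminus\dom(p)\subseteq\supp(\dot h)$, and then pick any single $q\in A$; this $q$ and the corresponding $m$ work. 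I would present whichever of the two routes is shorter given what has already been set up, most naturally the appeal to Lemma~\ref{small-domain}.
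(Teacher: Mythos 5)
Your proof is correct. The paper in fact states this lemma without any proof, treating it as a standard fact, and your derivation --- extracting from the nice name $\dot h$ a nice name for the value $\dot h(\check n)$ with support inside $\supp(\dot h)$ and then applying Lemma~\ref{small-domain} --- is exactly the reduction the authors evidently intend, given that the two lemmas are stated back to back.
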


\begin{lemma}\label{permutation}\cite[Lemma 14.37]{jech} Suppose that $\phi(x_1, \dots,x_n)$ is a formula
 of the language of {\sf ZFC} with  $n\in \N$ free variables
$x_1, \dots,x_n$ and suppose that $\dot x_1, \dots,\dot x_n$ are $\PP$-names. 
Then for every $p\in\PP$ and every automorphism $\pi\colon \PP\to\PP$
$$p\forces \phi(\dot x_1, \dots,\dot x_n)\ \ \hbox{\rm if and only if} \ \ \pi(p)\forces 
\phi(\pi^{\dot\PP}(\dot x_1), \dots,\pi^{\dot\PP}(\dot x_n)),$$
where $\pi^{\dot\PP}$ denotes the automorphism on $\PP$-names induced by $\pi$. 
\end{lemma}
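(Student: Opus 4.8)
The plan is to follow the standard argument for the symmetry lemma (cf.\ \cite{jech}): reduce to atomic formulas and then induct on logical complexity, using that an automorphism $\pi\colon\PP\to\PP$ is in particular an order isomorphism, so it maps antichains to antichains and dense sets to dense sets and satisfies $q\leq p\iff\pi(q)\leq\pi(p)$, while the induced map $\pi^{\dot\PP}$ on $\PP$-names is a bijection of the class of names onto itself (with inverse $(\pi^{-1})^{\dot\PP}$).

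First I would treat atomic formulas. By simultaneous $\in$-recursion on the ranks of the names involved one proves
$$p\forces \dot x\in\dot y\iff \pi(p)\forces\pi^{\dot\PP}(\dot x)\in\pi^{\dot\PP}(\dot y) \qquad\text{and}\qquad p\forces\dot x=\dot y\iff\pi(p)\forces\pi^{\dot\PP}(\dot x)=\pi^{\dot\PP}(\dot y).$$
This is the heart of the matter. One unfolds the recursive definition of the forcing relation for atomic statements --- e.g.\ $p\forces\dot x\in\dot y$ means that $\{r\leq p:\exists (\dot z,s)\in\dot y\ (r\leq s\ \wedge\ r\forces\dot z=\dot x)\}$ is dense below $p$, and similarly for equality --- applies $\pi$ throughout, and uses that by definition $(\dot z,s)\in\dot y$ iff $(\pi^{\dot\PP}(\dot z),\pi(s))\in\pi^{\dot\PP}(\dot y)$ together with the inductive hypothesis for the equality relation on the lower-rank names. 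Every clause then transforms into the corresponding clause for $\pi(p),\pi^{\dot\PP}(\dot x),\pi^{\dot\PP}(\dot y)$.

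Next I would induct on the complexity of $\phi$. The case $\phi\wedge\psi$ is immediate. For $\neg\phi$: $p\forces\neg\phi(\dot x_1,\dots,\dot x_n)$ iff no $q\leq p$ forces $\phi(\dot x_1,\dots,\dot x_n)$; since $q\mapsto\pi(q)$ is a bijection of $\{q:q\leq p\}$ onto $\{q':q'\leq\pi(p)\}$ and, by the inductive hypothesis, $q\forces\phi(\dot x_1,\dots,\dot x_n)$ iff $\pi(q)\forces\phi(\pi^{\dot\PP}(\dot x_1),\dots,\pi^{\dot\PP}(\dot x_n))$, this is equivalent to no $q'\leq\pi(p)$ forcing $\phi(\pi^{\dot\PP}(\dot x_1),\dots,\pi^{\dot\PP}(\dot x_n))$, i.e.\ to $\pi(p)\forces\neg\phi(\pi^{\dot\PP}(\dot x_1),\dots,\pi^{\dot\PP}(\dot x_n))$. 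For $\exists x\,\phi$: $p\forces\exists x\,\phi$ iff $D=\{q\leq p:\exists\ \PP\text{-name }\dot y,\ q\forces\phi(\dot y,\dot x_1,\dots,\dot x_n)\}$ is dense below $p$; as $\pi$ is an order isomorphism, $\pi[D]$ is dense below $\pi(p)$, and by the inductive hypothesis together with surjectivity of $\pi^{\dot\PP}$ on names, $\pi[D]=\{q'\leq\pi(p):\exists\ \PP\text{-name }\dot y,\ q'\forces\phi(\dot y,\pi^{\dot\PP}(\dot x_1),\dots,\pi^{\dot\PP}(\dot x_n))\}$, so $\pi(p)\forces\exists x\,\phi$ with the $\dot x_i$ replaced by $\pi^{\dot\PP}(\dot x_i)$. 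Disjunction and universal quantification reduce to these via the propositional connectives. This proves the ``only if'' direction for an arbitrary automorphism $\pi$; the ``if'' direction follows by applying it to $\pi^{-1}$.

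The step I expect to be the main obstacle is the atomic case: one must verify that every clause in the recursive definition of $\forces$ for $\in$ and $=$ is preserved when $\pi$ is applied simultaneously to conditions and to names. An alternative that sidesteps the bookkeeping is to extend $\pi$ to an automorphism of the Boolean completion $\B$ of $\PP$ and then to $V^{\B}$; an $\in$-induction gives $\|\phi(\pi^{\dot\PP}(\dot x_1),\dots,\pi^{\dot\PP}(\dot x_n))\|=\pi(\|\phi(\dot x_1,\dots,\dot x_n)\|)$, and the lemma is then immediate from $p\forces\psi\iff p\leq\|\psi\|$ and the fact that $\pi$ preserves $\leq$. Since the rest of the paper manipulates finite conditions directly, I would present the combinatorial version.
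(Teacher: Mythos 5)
The paper offers no proof of this lemma, only the citation to \cite[Lemma 14.37]{jech}, and your argument is precisely the standard symmetry-lemma proof that the citation points to (induction on formula complexity with the mutually recursive atomic case handled by rank induction on names); it is correct, including the reduction of the ``if'' direction to the ``only if'' direction via $\pi^{-1}$. The Boolean-valued variant you mention as an alternative is in fact essentially how Jech presents it, so either version matches the intended source.
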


\begin{lemma}\label{permutation-symmetric} Suppose that 
$S_1, S_2\subseteq\omega_2$, $\sigma:\omega_2\rightarrow\omega_2$
is a permutation such that $\sigma=\sigma^{-1}$ and $p\in \PP$ is $(\sigma, S_1, S_2)$-symmetric.  
Suppose that $\phi(x_1, \dots,x_n)$ is a formula
 of the language of {\sf ZFC} with  $n\in \N$ free variables
$x_1, \dots,x_n$ and suppose that $\dot x_1, \dots,\dot x_n$ are nice $\PP$-names
whose supports are included in $S_1$.  Then
 $$p\forces \phi(\dot x_1, \dots,\dot x_n)\ \ \hbox{\rm if and only if} \ \  p\forces 
 \phi(\sigma^{\dot\PP}(\dot x_1), \dots,\sigma^{\dot\PP}(\dot x_n))$$
\end{lemma}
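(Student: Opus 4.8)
The plan is to reduce everything to a single application of Lemma \ref{permutation}. The lift $\sigma^\PP$ itself need not fix $p$, so instead I would replace $\sigma$ by a permutation $\pi_0$ of $\omega_2$ that still agrees with $\sigma$ on $S_1$ (hence has the same effect on the names $\dot x_i$) but additionally fixes $p$. Concretely, set $\pi_0\upharpoonright(S_1\cup S_2)=\sigma\upharpoonright(S_1\cup S_2)$ and $\pi_0\upharpoonright(\omega_2\setminus(S_1\cup S_2))=Id$. Since $\sigma=\sigma^{-1}$ and $\sigma[S_1]=S_2$ (part of $p$ being $(\sigma,S_1,S_2)$-symmetric), we have $\sigma[S_1\cup S_2]=S_1\cup S_2$, so $\pi_0$ is a well-defined permutation of $\omega_2$ with $\pi_0\upharpoonright S_1=\sigma\upharpoonright S_1$.

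The key point is that $\pi_0^\PP(p)=p$. Writing $p=(p\upharpoonright(S_1\cup S_2))\cup(p\upharpoonright(\omega_2\setminus(S_1\cup S_2)))$, the lift $\pi_0^\PP$ fixes the second piece (it is the identity there), and on the first piece it acts as $\sigma^\PP$, hence sends that piece to $\sigma^\PP(p\upharpoonright S_1)\cup\sigma^\PP(p\upharpoonright S_2)$. Since $p$ is $(\sigma,S_1,S_2)$-symmetric and, by Lemma \ref{symmetric-symmetric}, also $(\sigma,S_2,S_1)$-symmetric, this equals $(p\upharpoonright S_2)\cup(p\upharpoonright S_1)=p\upharpoonright(S_1\cup S_2)$, so indeed $\pi_0^\PP(p)=p$. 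I would also observe that $\pi_0^{\dot\PP}(\dot x_i)=\sigma^{\dot\PP}(\dot x_i)$ for each $i$: as $\dot x_i$ is a nice name with support contained in $S_1$, every condition occurring in it has domain inside $S_1$ and the only inner names occurring are canonical names (fixed by any lift), so the value $\pi_0^{\dot\PP}(\dot x_i)$ depends only on $\pi_0\upharpoonright S_1=\sigma\upharpoonright S_1$.

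Finally, applying Lemma \ref{permutation} to the automorphism $\pi=\pi_0^\PP$ gives
$$p\forces\phi(\dot x_1,\dots,\dot x_n)\iff\pi_0^\PP(p)\forces\phi\big(\pi_0^{\dot\PP}(\dot x_1),\dots,\pi_0^{\dot\PP}(\dot x_n)\big),$$
and substituting $\pi_0^\PP(p)=p$ and $\pi_0^{\dot\PP}(\dot x_i)=\sigma^{\dot\PP}(\dot x_i)$ on the right-hand side is precisely the claimed equivalence. The step I expect to require the most care is choosing the right $\pi_0$ and verifying $\pi_0^\PP(p)=p$ --- this is exactly where the symmetry hypothesis on $p$ (and the fact that $\sigma$ preserves $S_1\cup S_2$ setwise) is used; everything else is routine bookkeeping with lifts and nice names, and in particular this route avoids any absoluteness issues despite $\phi$ being an arbitrary formula.
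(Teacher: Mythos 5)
Your proof is correct, but it takes a genuinely different route from the paper's. The paper argues in two stages: it first shows that $p\upharpoonright S_1$ already forces $\phi(\dot x_1,\dots,\dot x_n)$ (via a predense family of images $\pi(p)$ under automorphisms that fix every condition with domain in $S_1$, together with Lemma \ref{permutation}), and only then applies Lemma \ref{permutation} to $\sigma^\PP$ and the shrunken condition $p\upharpoonright S_1$, using $\sigma^\PP(p\upharpoonright S_1)=p\upharpoonright S_2\geq p$. You instead keep the condition $p$ fixed and modify the permutation: replacing $\sigma$ by $\pi_0$ which agrees with $\sigma$ on $S_1\cup S_2$ (legitimate since $\sigma=\sigma^{-1}$ and $\sigma[S_1]=S_2$ give $\sigma[S_1\cup S_2]=S_1\cup S_2$) and is the identity elsewhere, so that $\pi_0^\PP(p)=p$ by the two-sided symmetry of $p$ (Lemma \ref{symmetric-symmetric}) while $\pi_0^{\dot\PP}$ and $\sigma^{\dot\PP}$ agree on nice names supported in $S_1$; a single application of Lemma \ref{permutation} then finishes both directions at once. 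Your version is arguably more economical --- it needs no predensity argument and handles the equivalence symmetrically rather than proving one implication and then invoking $\sigma^{-1}$ --- while the paper's version isolates the reusable intermediate fact that forcing of statements about names supported in $S_1$ depends only on $p\upharpoonright S_1$. The one point that deserves the care you gave it is the verification that $\pi_0^\PP(p)=p$, which is exactly where the $(\sigma,S_1,S_2)$- and $(\sigma,S_2,S_1)$-symmetry of $p$ enter; that verification is done correctly.
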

\begin{proof}
First, we will show ``$\implies$" implication. 
Note that the set $D=\{\pi(p):\pi\colon \PP\to \PP$ is a permutation satisfying 
$ \pi(q)=q $ whenever $\dom(q)\subseteq S_1\}$ is predense in 
$R=\{r\in\PP : r\upharpoonright S_1 = p\upharpoonright S_1\}$. By Lemma \ref{permutation} 
$$r\forces \phi(\dot x_1, \dots,\dot x_n)$$
for every $r\in R$. In particular 
$$p\upharpoonright S_1\forces \phi(\dot x_1, \dots,\dot x_n).$$
By Lemma \ref{permutation} applied to $\sigma^\PP$ we get that 
$$p\leq p\upharpoonright S_2 = \sigma^\PP(p\upharpoonright S_1)\forces 
\phi(\sigma^{\dot\PP}(\dot x_1), \dots,\sigma^{\dot\PP}(\dot x_n)).$$

For the reverse implication notice that the supports of 
$\sigma^{\dot\PP}(\dot x_1), \dots,\sigma^{\dot\PP}(\dot x_n)$ 
are included in $S_2$ and apply the first part of the proof to $\sigma^{-1}$. 
\end{proof}

\section{The product of $c_0(2^\omega)$ in the Cohen model}

    Every vector in $\ell_2$ is a sequences of complex numbers, 
    and every complex number is a pair of real numbers. 
    Thus, by the standard argument we may choose bijections defined in an 
    absolute way, that identify the sets
     $\ell_2 \sim \C^\N \sim \R^\N \sim  \N^\N \sim \mathcal{P}(\N)$.  
     We will use such identification in the context of names for 
     vectors - in particular, under such identification, every $v\in\ell_2$ in 
     $V^\PP$ is named by a nice $\PP$-name $\dot{v} \in V$. 

\begin{lemma}\label{permutations13}
Assume {\sf CH}.
    Suppose $A\in [\omega_2]^{\omega_2}$, $\{S_\alpha\}_{\alpha\in A}$ is a
     $\Delta$-system of distinct countable sets with root $\Delta$
      and for every $\alpha\in A$ and $l\in\N$
       we are given  a nice $\PP$-name $\dot{v}_\alpha(l)$ 
       for a vector in $\ell_2$ such that
        $\supp(\dot{v}_\alpha(l))\subseteq S_\alpha$. 
        Then there is $C\in [A]^{\omega_2}$ and 
        permutations $\sigma_{\alpha,\beta}\colon \omega_2\to\omega_2 $ such that 
    \begin{itemize}
    \item $\sigma_{\alpha, \beta}= \sigma_{\alpha, \beta}^{-1}$,
    \item $\sigma_{\alpha,\beta} [S_\alpha]= S_\beta$,
        \item $\sigma_{\alpha,\beta}\upharpoonright \Delta = Id_\Delta$,
        \item $\sigma_{\alpha,\beta}^{\dot\PP}(\dot{v}_\alpha(l))=\dot{v}_\beta(l)$
    \end{itemize}
    for all $\alpha,\beta\in C, \alpha\neq \beta$ and all $l\in\N$. 
\end{lemma}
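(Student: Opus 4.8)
The plan is to stratify the names $\dot v_\alpha(l)$ according to the finite patterns they induce on the root $\Delta$ and on an order-isomorphic copy of $S_\alpha$, then extract a large subfamily on which these patterns agree, and finally manufacture the involutions $\sigma_{\alpha,\beta}$ by transporting $S_\alpha$ onto $S_\beta$ along the order isomorphism fixing $\Delta$. Concretely, for each $\alpha\in A$ enumerate $S_\alpha$ in increasing order; since $|\Delta|\leq\omega$ and all $S_\alpha$ have the same order type (after possibly shrinking $A$ so that all $S_\alpha$ are order-isomorphic over $\Delta$, which costs nothing as there are only countably many countable order types), we obtain for each pair $\alpha,\beta$ a canonical order isomorphism $h_{\alpha,\beta}\colon S_\alpha\to S_\beta$ with $h_{\alpha,\beta}\upharpoonright\Delta=Id_\Delta$ and $h_{\alpha,\beta}=h_{\beta,\alpha}^{-1}$. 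I would first record the combinatorial fact that, after a further shrinking, one may also assume that $h_{\alpha,\beta}$ extends to an honest involution $\sigma_{\alpha,\beta}$ of $\omega_2$ with $\sigma_{\alpha,\beta}=\sigma_{\alpha,\beta}^{-1}$: this is possible because $S_\alpha\setminus\Delta$ and $S_\beta\setminus\Delta$ can be taken disjoint (pass to a subfamily indexed by a set on which the $S_\alpha\setminus\Delta$ are pairwise disjoint — available since the $\Delta$-system already gives pairwise intersections equal to $\Delta$), so we simply let $\sigma_{\alpha,\beta}$ swap $S_\alpha\setminus\Delta$ with $S_\beta\setminus\Delta$ via $h_{\alpha,\beta}$ and fix everything else pointwise.

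The heart of the argument is the fourth bullet, $\sigma_{\alpha,\beta}^{\dot\PP}(\dot v_\alpha(l))=\dot v_\beta(l)$. Here is where {\sf CH} and a counting argument enter. Each $\dot v_\alpha(l)$ is a nice name for an element of $\ell_2$, hence (under the fixed absolute identification $\ell_2\sim\mathcal P(\N)$) a nice name for a subset of $\N$ with support contained in the countable set $S_\alpha$. The ``isomorphism type'' of such a name over the pair $(S_\alpha,\Delta)$ — that is, the name obtained by transporting it along the increasing enumeration of $S_\alpha$ to a name with support inside a fixed countable ordinal, remembering which coordinates land in $\Delta$ — is an object coded by a real, and under {\sf CH} there are only $\omega_1<\omega_2$ many such types for each $l$, hence only $\omega_1$ many for the whole sequence $(\dot v_\alpha(l))_{l\in\N}$. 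Therefore some $C\in[A]^{\omega_2}$ is homogeneous: for all $\alpha,\beta\in C$ and all $l$, the name $\dot v_\beta(l)$ is exactly the image of $\dot v_\alpha(l)$ under the canonical order isomorphism $h_{\alpha,\beta}$, i.e. under $\sigma_{\alpha,\beta}^{\dot\PP}$ (the names have supports inside $S_\alpha$ resp.\ $S_\beta$, so $\sigma_{\alpha,\beta}^{\dot\PP}$ only sees the restriction $h_{\alpha,\beta}$, and it fixes $\Delta$ by construction). The first three bullets are then immediate from the construction of $\sigma_{\alpha,\beta}$.

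The main obstacle I anticipate is bookkeeping the notion of ``isomorphism type of a nice name'' so that it is genuinely coded by a real and genuinely absolute — one must be careful that a nice name for a subset of $\N$ is a set of pairs $(\check n,p)$ with $p$ ranging over antichains in $\PP$, and that the relevant data (which finitely many coordinates of each condition lie in $\Delta$, and the relative position of the others in the enumeration of $S_\alpha$) can be packaged into a single countable object; once that is set up, the {\sf CH}-counting ($\omega_1^{\aleph_0}=\omega_1$ under {\sf CH}, applied to sequences indexed by $l\in\N$) and the pigeonhole on $\omega_2$ are routine. A minor secondary point is verifying $\sigma_{\alpha,\beta}=\sigma_{\alpha,\beta}^{-1}$ simultaneously with the disjointness arrangement; this is handled by the initial passage to a subfamily with $(S_\alpha\setminus\Delta)$ pairwise disjoint, which I would do before everything else.
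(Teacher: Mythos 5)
Your proposal is correct and follows essentially the same route as the paper's proof: transport each $\dot{v}_\alpha(l)$ along a bijection of $S_\alpha$ onto a fixed countable set agreeing on $\Delta$, use {\sf CH} to see there are only $\omega_1$ possible transported sequences and pigeonhole on $\omega_2$, then compose the transports to get maps $S_\alpha\to S_\beta$ and extend them to involutions of $\omega_2$ fixing everything outside $S_\alpha\cup S_\beta$. (The only slip is the claim that there are countably many countable order types --- there are $2^{\aleph_0}$ of them --- but under {\sf CH} that is still $\omega_1<\omega_2$, so your shrinking goes through; the paper sidesteps order types entirely by using arbitrary bijections of the $S_\alpha$ onto $\N$ that agree on $\Delta$.)
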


\begin{proof}
Without loss of generality we may assume that $\Delta$ and $S_\alpha\setminus \Delta$ are infinite for all $\alpha\in A$. 
    For each $\alpha \in A$ choose a permutation $\widetilde{\sigma}_{\alpha}\colon \omega_2\rightarrow \omega_2$ 
    such that $\widetilde{\sigma}_\alpha[S_\alpha]=\N$
     and $\widetilde{\sigma}_{\alpha}\upharpoonright\Delta=\widetilde{\sigma}_{\beta}\upharpoonright\Delta$
      for all $\alpha, \beta\in A$. Then for every $\alpha\in \N$  and $l\in\N$
      the name $\widetilde{\sigma}_\alpha^{\dot\PP}(\dot{v}_\alpha(l))$
       is a nice $\PP$-name with the support included in $\N$. 
       By {\sf CH} there are only $\omega_1$ sequences of the form $(\widetilde{\sigma}_\alpha^{\dot\PP}(\dot{v}_\alpha(l)))_{l\in\N}$,
        so there is $C\in [A]^{\omega_2}$ such that 
        $\widetilde{\sigma}_{\alpha}^{\dot\PP}(\dot{v}_{\alpha}(l)) = \widetilde{\sigma}_{\beta}^{\dot\PP}(\dot{v}_{\beta}(l))$
         for $\alpha,\beta\in C$ and $l\in \N$. For $\alpha, \beta\in C$ 
         put $\widetilde{\sigma}_{\alpha,\beta} = \widetilde{\sigma}_{\beta}^{-1}\widetilde{\sigma}_\alpha$. 
         Then $\widetilde{\sigma}_{\alpha,\beta} [S_\alpha]= S_\beta$
         and $\widetilde{\sigma}_{\alpha,\beta}^{\dot\PP}(\dot{v}_\alpha(l)) =
          (\widetilde{\sigma}_\beta^{-1}\widetilde{\sigma}_\alpha)^{\dot\PP}(\dot{v}_\alpha(l)) = 
         (\widetilde{\sigma}_\beta^{-1}\widetilde{\sigma}_\beta)^{\dot\PP}(\dot{v}_\beta(l)) = \dot{v}_\beta(l)$ for $\alpha, \beta\in C, l \in \N$. Since $\widetilde{\sigma}_{\alpha}\upharpoonright\Delta=\widetilde{\sigma}_{\beta}\upharpoonright\Delta$
     we have $\widetilde{\sigma}_{\alpha,\beta}\upharpoonright \Delta = Id_\Delta$. 
         
         Now define $\sigma_{\alpha, \beta}$ by:
         \begin{itemize}
             \item $\sigma_{\alpha,\beta}\upharpoonright S_\alpha = \widetilde{\sigma}_{\alpha,\beta}\upharpoonright S_\alpha$,
             \item $\sigma_{\alpha,\beta} (\gamma) = \widetilde{\sigma}_{\alpha, \beta}^{-1}(\gamma)$ for $\gamma \in S_\beta\backslash S_\alpha$,
             \item $\sigma_{\alpha, \beta}(\gamma) = \gamma$ for $\gamma \notin S_\alpha\cup S_\beta$
         \end{itemize}
         and note that $\sigma_{\alpha, \beta}$ satisfies the required conditions. 
\end{proof}

\begin{proposition}\label{non-compact}
Assume {\sf CH}.
Suppose that
$(\dot E_{n,\alpha}:(n,\alpha)\in \N\times \omega_2)$ 
 and $(\dot B_\gamma: \gamma\in \omega_2^\N)$ 
 are such $\PP$-names that 
 for every $\alpha<\omega_2$, $n\in\N$ and $\gamma\in \omega_2^\N$ the forcing $\PP$ forces that
 \begin{itemize}
 \item $\dot E_{n,\alpha}$,  $\dot B_\gamma$ is a noncompact projection   
  in $\mathcal{B}(\ell_2)$,
  \item $\dot E_{n,\gamma(n)}\leqK \dot B_\gamma$.
  \end{itemize}
Then there are disjoint $\gamma_1,\gamma_2 \in \omega_2^\N$ such
 that 
 $$\PP\forces\dot B_{\gamma_1}\dot B_{\gamma_2}\  \hbox{\rm is non-compact.}$$ 
\end{proposition}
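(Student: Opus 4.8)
The plan is to exploit the homogeneity of the Cohen forcing. As in the preamble of this section, code each bounded operator by an element of $\ell_2$ via its action on a fixed countable dense subset, so that (using that $\PP$ is c.c.c.) every $\dot E_{n,\alpha}$ and every $\dot B_\gamma$ may be taken to be a nice $\PP$-name with a countable support; write $S_{n,\alpha}=\supp(\dot E_{n,\alpha})$ and $T_\gamma=\supp(\dot B_\gamma)$. Assuming {\sf CH}, repeated use of the $\Delta$-system lemma (Lemma~\ref{Delta_lemma_CH}) and of Lemma~\ref{functions} produces an uncountable $I\subseteq\omega_2$ and pairwise disjoint functions $\gamma_\xi\in\omega_2^\N$ ($\xi\in I$) such that, setting $U_\xi=T_{\gamma_\xi}\cup\bigcup_{n}S_{n,\gamma_\xi(n)}$, the family $(U_\xi)_{\xi\in I}$ is a $\Delta$-system with a root $\Delta^*$, each $S_{n,\gamma_\xi(n)}\cap\Delta^*$ equals a fixed $\Delta_n\subseteq\Delta^*$ independent of $\xi$, and the sets $S_{n,\gamma_\xi(n)}\setminus\Delta_n\subseteq U_\xi\setminus\Delta^*$ are pairwise disjoint over all $(\xi,n)$. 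Then I would apply Lemma~\ref{permutations13} to $(U_\xi)_{\xi\in I}$ with the nice names $\dot v_\xi(0)=\dot B_{\gamma_\xi}$, $\dot v_\xi(n+1)=\dot E_{n,\gamma_\xi(n)}$: shrinking $I$ we obtain involutions $\sigma_{\xi,\eta}\colon\omega_2\to\omega_2$ (for $\xi\ne\eta$ in $I$) with $\sigma_{\xi,\eta}[U_\xi]=U_\eta$, $\sigma_{\xi,\eta}\upharpoonright\Delta^*=Id_{\Delta^*}$, $\sigma_{\xi,\eta}^{\dot\PP}(\dot B_{\gamma_\xi})=\dot B_{\gamma_\eta}$ and $\sigma_{\xi,\eta}^{\dot\PP}(\dot E_{n,\gamma_\xi(n)})=\dot E_{n,\gamma_\eta(n)}$ for all $n$.

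Fix any $\xi\ne\eta$ in $I$, put $\gamma_1=\gamma_\xi$, $\gamma_2=\gamma_\eta$ (disjoint), and abbreviate $\sigma=\sigma_{\xi,\eta}$, $\dot E_n=\dot E_{n,\gamma_1(n)}$, so $\dot B_{\gamma_2}=\sigma^{\dot\PP}(\dot B_{\gamma_1})$ and $\dot E_{n,\gamma_2(n)}=\sigma^{\dot\PP}(\dot E_n)$. I claim $\PP\forces\dot B_{\gamma_1}\dot B_{\gamma_2}$ is non-compact, which proves the proposition. Suppose toward a contradiction that some $p\in\PP$ forces $\dot K:=\dot B_{\gamma_1}\dot B_{\gamma_2}$ to be compact. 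Since all names involved have support inside $U_\xi\cup U_\eta$, whether $\dot K$ is compact depends only on the restriction of the generic to $U_\xi\cup U_\eta$; as $\PP\cong(\PP\upharpoonright(U_\xi\cup U_\eta))\times(\PP\upharpoonright(\omega_2\setminus(U_\xi\cup U_\eta)))$, we may assume $\dom(p)\subseteq U_\xi\cup U_\eta$. One easy case first: if $S_{n,\gamma_1(n)}\subseteq\Delta^*$ for some $n$, then $\sigma$ fixes the name $\dot E_n$, so $\dot E_n=\dot E_{n,\gamma_2(n)}$ and both $\dot B_{\gamma_1}$ and $\dot B_{\gamma_2}$ dominate $\dot E_n$; since $\pi(\dot E_n)\ne 0$ and $\pi(\dot B_{\gamma_1})\pi(\dot B_{\gamma_2})\pi(\dot E_n)=\pi(\dot E_n)$, $\PP$ forces $\dot K$ non-compact, contradicting $p$. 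So we may assume every $\dot E_n$ depends essentially on generic coordinates in the nonempty pairwise disjoint sets $Z_n:=S_{n,\gamma_1(n)}\setminus\Delta^*\subseteq U_\xi\setminus\Delta^*$ (and $\dot E_{n,\gamma_2(n)}$ on $\sigma[Z_n]\subseteq U_\eta\setminus\Delta^*$).

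The contradiction should come from forcing, below $p$, an orthonormal sequence $(w_k)_{k\in\N}$ with $\|\dot K w_k\|\ge\tfrac12$ for all $k$, which is impossible by Lemma~\ref{compact_Conway}. Since $\dot K$ being compact bounds the length of such orthonormal tuples, and ``for all $k$'' ranges over the absolute set $\N$, it is enough to show that for each fixed $N$ the conditions forcing the existence of an orthonormal $N$-tuple with $\|\dot K w_i\|\ge\tfrac12$ are dense; each such tuple is produced by iterating $N$ times the following single step, keeping conditions $(\sigma, U_\xi\setminus\Delta^*,U_\eta\setminus\Delta^*)$-symmetric in the sense of Lemma~\ref{symmetric-condition}. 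The single step is: given a $\sigma$-symmetric $q\in\PP$ and finitely many already chosen vector-names (spanning a finite-dimensional space), find a $\sigma$-symmetric $q'\le q$ and a unit vector-name $\dot w$ orthogonal to the chosen ones with $q'\forces\big(\|\dot B_{\gamma_1}\dot w\|>\tfrac{99}{100}\ \&\ \|\dot B_{\gamma_2}\dot w\|>\tfrac{99}{100}\big)$; indeed then $\|\dot B_{\gamma_1}\dot B_{\gamma_2}\dot w-\dot w\|$ is small by Lemma~\ref{rachunki}, so $\|\dot K\dot w\|>\tfrac12$. To produce $\dot w$ one uses that $\dot B_{\gamma_1}$ dominates modulo $\K$ every noncompact $\dot E_n$: choosing $n$ with $Z_n\cup\sigma[Z_n]$ disjoint from $\dom(q)$, and invoking Lemmas~\ref{projection-very}, \ref{compact-very} and \ref{dense}, a $\sigma$-symmetric extension of $q$ names an arbitrarily ``far-out'' unit vector $\dot u$ in the range of $\dot E_n$ with $\|\dot B_{\gamma_1}\dot u\|$ as close to $1$ as desired; applying $\sigma^{\dot\PP}$ and Lemma~\ref{permutation-symmetric} yields the mirror vector on the $\eta$-side with the analogous property for $\dot B_{\gamma_2}$.

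The main difficulty — and the step I expect to be delicate — is to merge these two one-sided vectors into a single $\dot w$ good simultaneously for $\dot B_{\gamma_1}$ and $\dot B_{\gamma_2}$. Here one must use that, although the halves $U_\xi\setminus\Delta^*$ and $U_\eta\setminus\Delta^*$ of the generic are independent, the operators $\dot B_{\gamma_1},\dot B_{\gamma_2}$ both act on the \emph{separable} space $\ell_2$, agree on the data coming from the common root $\Delta^*$, and are exchanged by $\sigma$; this ``smallness'' of $\ell_2$ forces the ranges of $\dot B_{\gamma_1}$ and $\dot B_{\gamma_2}$ — each noncompact and each containing modulo $\K$ the ranges of infinitely many of the independent noncompact projections $\dot E_n$ (resp.\ $\dot E_{n,\gamma_2(n)}$) — to admit a far-out unit vector on which both act nearly as the identity, whose existence can be forced by a sufficiently careful symmetric density argument. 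This is exactly where the hypothesis that $\dot B_\gamma$ dominates \emph{infinitely many} noncompact projections (not merely one) is essential, and I expect it to be the technical heart of the proof.
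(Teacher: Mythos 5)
Your overall architecture --- $\Delta$-systems under {\sf CH}, disjoint $\gamma_\xi$'s via Lemma~\ref{functions}, an involution $\sigma$ exchanging the two sides, $(\sigma,S,S')$-symmetric conditions, and a contradiction with compactness coming from a unit vector nearly fixed by both $\dot B_{\gamma_1}$ and $\dot B_{\gamma_2}$ --- is the paper's. But you stop exactly where the proof actually lives: you explicitly defer the ``merging'' of the two one-sided vectors to ``a sufficiently careful symmetric density argument'' and label it the technical heart you expect to be delicate. That step \emph{is} the proposition; everything preceding it is routine combinatorics. Moreover, the heuristic you offer for why the merge should succeed (separability of $\ell_2$ forcing the ranges of two noncompact projections, each dominating many noncompact $\dot E_n$'s, to share a far-out almost-common unit vector) is false as a statement about pairs of noncompact projections: the projections onto the closed spans of two disjoint infinite subsets of an orthonormal basis are both noncompact and have product zero, so no unit vector is $0.99$-close to both ranges. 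Only the symmetry argument produces the vector, and you have not run it.

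The paper's resolution --- whose ingredients (Lemmas~\ref{dense}, \ref{small-domain}, \ref{permutation-symmetric}) you cite but never assemble --- is a decide-and-symmetrize step applied to names for witness \emph{vectors} rather than for the operators. One fixes in advance nice names $\dot v_{n,\alpha}(l)$ for very orthonormal vectors with $\|\dot E_{n,\alpha}(\dot v_{n,\alpha}(l))\|\ge 1-1/(l+1)$, runs the $\Delta$-system and Lemma~\ref{permutations13} on \emph{their} supports so that $\sigma^{\dot\PP}(\dot v_{n,\gamma_\xi(n)}(l))=\dot v_{n,\gamma_\eta(n)}(l)$, and then extends a symmetric condition inside $S=S_{n,\gamma_\xi(n)}$ to decide $\dot v_{n,\gamma_\xi(n)}(l)=\check v$ for a ground-model $v\in Fin_1(\sqrt\Q)$ (this is why the witness vectors are taken with finite support and rational square moduli); the symmetrized condition $s=r\cup\sigma(r\upharpoonright S)$ then also forces $\dot v_{n,\gamma_\eta(n)}(l)=\sigma^{\dot\PP}(\check v)=\check v$ by Lemma~\ref{permutation-symmetric}, so the \emph{same} ground-model vector $\check v$ is $0.99$-in the ranges of both $\dot E_{n,\gamma_\xi(n)}$ and $\dot E_{n,\gamma_\eta(n)}$, hence nearly fixed by both $\dot B$'s modulo the compact differences $\dot K_{n,\xi},\dot K_{n,\eta}$. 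Two further points you gloss over are load-bearing: the thresholds $\dot h_\xi(n),\dot h_\eta(n)$ controlling those compact differences must be decided before symmetrizing, and their names have uncontrolled countable supports $R_\xi,R_\eta$, which is why the paper keeps all $\omega_2$ indices and invokes Lemma~\ref{RcapS} to select a \emph{particular} pair with $S_\xi\cap R_\eta=S_\eta\cap R_\xi=\varnothing$ --- so your ``fix any $\xi\ne\eta$ in $I$'' is not justified; and the infinitude of coordinates $n$ is used precisely to find one $n$ whose supports meet the finite $\dom(p)$ only inside the root, not (as you suggest) to power the merging itself.
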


\begin{proof}

For $(n,\alpha)\in \N\times \omega_2$ and $l\in \N$ let $\dot{v}_{n,\alpha}(l)$
 be a nice $\PP$-name such that 
 \begin{enumerate}
    \item\label{vs-very} $\PP\forces (\dot{v}_{n,\alpha}(l))_{l\in\N}$ is a very  orthonormal sequence in $\ell_2$ ,
    \item\label{Es-vs} $\PP\forces \|\dot{E}_{n,\alpha}(\dot{v}_{n,\alpha}(l))\|\geq 1-1/(l+1)$
\end{enumerate}
The existence of such names follows from Lemma \ref{projection-very}.
For $n\in \N, \alpha\in \omega_2$ put 
\begin{gather}\label{Sns-vns}\tag{3}
S_{n,\alpha}=\bigcup_{l\in \N} \supp(\dot{v}_{n,\alpha}(l)).
\end{gather}
Each
$S_{n,\alpha}$ is countable, so by Lemma \ref{Delta_lemma_CH}, 
for each $n\in \N$ there is a set $A_n\in [\omega_2]^{\omega_2}$ 
such that $(S_{n,\alpha})_{\alpha\in A_n}$ is a $\Delta$-system with root $\Delta_n$, i.e.,
\begin{gather}\label{delta-sys}\tag{4}
S_{n,\alpha}\cap S_{n,\beta}=\Delta_n
 \end{gather}
 for all $n\in \N$ and distinct $\alpha, \beta\in A_n$.
The set
$\Delta=\bigcup_{n\in\N} \Delta_n$ is countable, 
so by  further thinning out of each $A_n$ we may assume that for every $\alpha\in A_n$ we have
$\Delta\cap S_{n,\alpha} = \Delta_n.$

By Lemma \ref{permutations13} we may also assume that for each
 $n\in \N$ and $\alpha, \beta\in A_n$
  there is a permutation $\sigma_{n,\alpha, \beta}\colon \omega_2 \to \omega_2$ such that
\begin{enumerate}\setcounter{enumi}{4}
    \item\label{sigma-inverse} $\sigma_{n,\alpha, \beta}= \sigma_{n, \alpha, \beta}^{-1}$,
    \item\label{sigma-Ss} $\sigma_{n,\alpha, \beta}[S_{n,\alpha}]=S_{n,\beta}$,
    \item\label{id-deltan} $\sigma_{n,\alpha,\beta}\upharpoonright {\Delta_n}=Id_{\Delta_n}$,
    \item\label{sigma-Es}  $\sigma_{n,\alpha,\beta}^{\dot\PP}(\dot{v}_{n,\alpha}(l))=\dot{v}_{n,\beta}(l)$.
\end{enumerate}
     
By (\ref{delta-sys}) we may use
Lemma \ref{functions} to obtain  $\gamma_\xi\in \omega_2^\N$ for every $\xi<\omega_2$ such that 
\begin{enumerate}\setcounter{enumi}{8}
    \item\label{Ss-disjoint} for distinct $(\xi,n),(\eta,m)\in\omega_2\times\N$  we have
    $$(S_{n,\gamma_\xi(n)}\backslash \Delta_n)\cap (S_{m,\gamma_\eta(m)}\backslash \Delta_m)=\varnothing,$$
    \item\label{gammas-disjoint} $\gamma_\xi\cap \gamma_\eta = \varnothing$ for $\xi, \eta<\omega_2, \xi\neq \eta$.
\end{enumerate}

For $n\in\N, \xi<\omega_2$ let $\dot{K}_{n,\xi}$ be such $\PP$-names that 
\begin{gather}\label{Ks}\tag{11}
\PP \forces  \dot{K}_{n,\xi}=
 \dot{E}_{n,\gamma_\xi(n)}-\dot{B}_{\gamma_\xi}\dot{E}_{n,\gamma_\xi(n)}.
 \end{gather}

Since by the hypothesis of the lemma, for each $n\in \N$ and $\xi<\omega_2$ we have that
 $\PP \forces \dot{E}_{n,\gamma_\xi(n)}\leqK \dot{B}_{\gamma_\xi}$, we may conclude that
$\PP \forces \dot{K}_{n,\xi} \text{ is compact}$, and so
by Lemma \ref{compact_Conway} for each $\xi<\omega_2$ there is a nice $\PP$-name $\dot{h}_\xi$ such that 
$\PP \forces \dot{h}_\xi\colon \N\rightarrow \N$  and
    for all $(n,\alpha)\in {\gamma}_\xi$ we have 
\begin{equation}\label{hs}
\begin{gathered}\tag{12}
    \PP \forces  \forall l\geq \dot{h}_\xi(\check n)  
    \ \ \ \ \ \|\dot{K}_{n,\xi}(\dot{v}_{n,\alpha}(l))\|<0.01. 
\end{gathered}
\end{equation}
Let $R_\xi=\supp(\dot{h}_\xi)$ and $S_\xi=\bigcup_{n\in\N} 
(S_{n,\gamma_\xi(n)}\backslash \Delta_n)$. 
By (\ref{Ss-disjoint}) the sequence $(S_\xi)_{\xi<\omega_2}$ 
consists of pairwise disjoint subsets of $\omega_2$. 
Since each $S_{n,\gamma_\xi(n)}$ is countable, $S_\xi$ is also countable for $\xi<\omega_2$. 

By Lemma \ref{RcapS} there are $\xi, \eta<\omega_2$ such that $\xi\neq \eta$ and 
\begin{gather}\label{RS-disjoint}\tag{13}
S_\xi\cap R_\eta = S_\eta\cap R_\xi=\varnothing. 
\end{gather}
For the rest of the proof we fix $\xi$ and $\eta$ as above.
By (\ref{gammas-disjoint}) $\gamma_\xi\cap \gamma_\eta=\varnothing$, so it is enough to show that 
\begin{gather*}
\PP \forces \dot{B}_{\gamma_\xi}\dot{B}_{\gamma_\eta} \text{ is non-compact.}
\end{gather*}
as required in the lemma. 
So suppose this is not the case and let us aim at a contradiction. Then by Lemma \ref{compact-very} there is
$p'\in \PP$ and a $\PP$-name $\dot f$ such that $p'$ forces that
\begin{itemize}
\item $\dot f\colon \N\rightarrow\N$,
\item For every very orthonormal sequence $(\dot v_l)_{l\in \N}$, 
for every $k\in \N$ and every $l\geq \dot f(k)$
$$\|\dot{B}_{\gamma_\xi}\dot{B}_{\gamma_\eta}(\dot v_l)\|\leq \frac{1}{k+1}.$$
\end{itemize}
Let $p\leq p'$ decide $m=\dot f(99)\in \N$, that is by (\ref{vs-very}), for every $n\in\N$ and every $l\geq m$ we have 
\begin{gather}\label{BB-vs}\tag{14}
p \forces \ \|\dot{B}_{\gamma_\xi}\dot{B}_{\gamma_\eta}
 (\dot{v}_{n,\gamma_\xi( n)}( l))\|\leq1/100.
\end{gather}

By (\ref{Ss-disjoint}) there exists $n\in\N$ such that 
\begin{gather}\label{dom(p)-disjoint}\tag{15}
\dom(p)\cap (S_{n,\gamma_\xi(n)} \cup S_{n,\gamma_\eta(n)})
 \subseteq \Delta_n.
 \end{gather}
 For the rest of the proof, along with  $\xi, \eta<\omega_2$ already fixed in (\ref{RS-disjoint}), we fix
 $n\in \N$ as in (\ref{dom(p)-disjoint}). To simplify the notation we put
 \begin{enumerate}\setcounter{enumi}{15}
    \item\label{sigma} $\sigma=\sigma_{n,\gamma_{\xi}(n),\gamma_{\eta}(n)}$,
    \item\label{S} $S=S_{n,\gamma_\xi(n)}$,
    \item\label{S'} $S'=S_{n,\gamma_\eta(n)}$.
\end{enumerate}
 Clearly by (\ref{RS-disjoint}) and (\ref{delta-sys}) we have 
 \begin{gather}\label{RSS'-disjoint}\tag{19}
S\cap R_\eta,  S'\cap R_\xi\subseteq \Delta_n=S\cap S'. 
\end{gather}
 
Notice that by (\ref{sigma})-(\ref{S'}), the conditions (\ref{sigma-inverse}), (\ref{sigma-Ss}), 
(\ref{delta-sys}) and (\ref{id-deltan}) yield
\begin{enumerate}\setcounter{enumi}{19}
    \item\label{sigma-inverse-new} $\sigma^{-1}=\sigma$,
    \item\label{sigma-Ss-new} $\sigma[S]=S'$, $\sigma[S']=S$, 
    \item\label{id-deltan-new} $\sigma\upharpoonright(S\cap S')=Id_{S\cap S'}$.
\end{enumerate}
So by (\ref{dom(p)-disjoint}) the condition $p$ is 
plainly $(\sigma, S, S')$-symmetric. In the next step of the proof we will use  Lemma \ref{symmetric-condition} several
times extending $p$ to conditions $s\leq p_\eta\leq p_\xi\leq p$
which remain $(\sigma, S, S')$-symmetric and decide some useful information.
  
  Since $\supp(\dot{h}_\xi)= R_\xi$, by Lemma \ref{remarkkkkkkk} there is $q_\xi\leq p$ and $k_\xi\in \N$
   such that 
$$
    \dom(q_\xi)\backslash\dom(p)\subseteq R_\xi \text{ and } q_\xi \forces \dot{h}_\xi(\check{n})=\check{k}_\xi.
$$
Put 
$$
p_\xi= q_\xi\cup \sigma(q_\xi\upharpoonright {S}).
$$

By (\ref{sigma-Ss-new}), (\ref{id-deltan-new}) and (\ref{RSS'-disjoint}) the $q_\xi$, $\sigma$, $S$ and $S'$
satisfy the hypothesis of Lemma
 \ref{symmetric-condition} and so we may conclude that  $p_\xi\in\PP$ and $p_\xi$ 
is $(\sigma, S, S')$-symmetric. By Lemma \ref{symmetric-symmetric} it is also $(\sigma, S', S)$-symmetric.

Again since $\supp(\dot{h}_\eta)= R_\eta$, by Lemma \ref{remarkkkkkkk}
  there is $q_\eta\leq p_\xi$ and $k_\eta\in\N$ such that 
$$
 \dom(q_\eta)\backslash\dom(p_\xi)\subseteq R_\eta \text{ and }   q_\eta \forces \dot{h}_\eta({n})=\check{k}_\eta.
$$
We put
$$
    p_\eta= q_\eta\cup \sigma_{n,\gamma_\eta(n),\gamma_\xi(n)}(q_\eta\upharpoonright {S_{n,\gamma_\eta(n)}})
$$
and use again  (\ref{sigma-Ss-new}), (\ref{id-deltan-new}) and (\ref{RSS'-disjoint}) 
to see that  $q_\eta$, $\sigma$, $S'$ and $S$
satisfy the hypothesis of Lemma
 \ref{symmetric-condition} and so we  conclude that  $p_\eta\in\PP$ and $p_\eta$ 
is $(\sigma, S', S)$-symmetric, which implies that it is $(\sigma, S, S')$-symmetric as well. Clearly
\begin{gather}\label{decision-ks}\tag{23}
  p_\eta\forces \dot{h}_\xi(\check{n})=\check{k}_\xi, \dot{h}_\eta({n})=\check{k}_\eta.
\end{gather}
as $p_\eta\leq q_\eta\leq q_\xi$.
For the rest of the proof fix
\begin{gather}\label{l}\tag{24}
l>\max\{k_\xi,k_\eta,m, 99 \}.
\end{gather}
Since $l>99$ by (\ref{Es-vs}) we have
\begin{gather}\label{Es-vs-l}\tag{25}
      \PP \forces \|\dot{E}_{n,\gamma_\xi(n)}(\dot{v}_{n,\gamma_\xi(n)}(l))\|\geq 0.99. 
\end{gather}

From (\ref{Sns-vns}) and (\ref{S}) 
we conclude that  $\supp(\dot{v}_{n,\gamma_\xi(n)}(l))\subseteq S$.  So by Lemma \ref{small-domain}
there is $r\leq p_\eta$ and $v\in Fin_1(\sqrt\Q)$ 
such that 
$$
\dom(r)\backslash\dom(p_\eta)\subseteq S \ \text{and}
 \ r\forces \check{v}=\dot{v}_{n,\gamma_\xi(n)}(l). 
$$
Define
$$
s=r\cup \sigma(r\upharpoonright {S}).
$$
By (\ref{sigma-inverse-new})-(\ref{id-deltan-new}) 
we see that  $r$, $\sigma$, $S'$ and $S$
satisfy the hypothesis of Lemma
 \ref{symmetric-condition} and so
$s$
is a $(\sigma,  S, S')$-symmetric element of $\PP$  satisfying
\begin{gather}\label{decision-v}\tag{26}
s\forces \check{v}=\dot{v}_{n,\gamma_\xi(n)}(l). 
\end{gather}
From (\ref{decision-v}) and (\ref{Es-vs-l}) we get that
\begin{gather}\label{xi-eps}\tag{27}
s\forces  \|\dot{E}_{n,\gamma_\xi(n)}(\check{v})\|\geq 0.99.
\end{gather}
Since $\check{v}$ is the canonical name for an element of $V$ 
we have $\sigma^{\dot\PP}(\check{v})=\check{v}$,  so
 from (\ref{Es-vs}), (\ref{sigma-Es}), (\ref{sigma}), (\ref{l}), (\ref{decision-v})
and Lemma \ref{permutation-symmetric}  we conclude that
\begin{gather}\label{eta-eps}\tag{28}
s\forces  \|\dot{E}_{n,\gamma_\eta(n)}(\check{v})\|=\|\dot{E}_{n,\gamma_\eta(n)}
(\sigma^{\dot\PP}(\dot{v}_{n,\gamma_\xi}(l)))\|=\|\dot{E}_{n,\gamma_\eta(n)}(\dot{v}_{n,\gamma_\eta}(l))\| \geq 0.99.
\end{gather}
Now from (\ref{xi-eps}), (\ref{eta-eps}) and Lemma \ref{rachunki} (2) we conclude that 
\begin{gather}\label{EEv-big}\tag{29}
    s\forces  |\langle\dot{E}_{n,\gamma_\xi(n)}(\check v), \dot{E}_{n,\gamma_\eta(n)}(\check{v})\rangle|> 1/2.
\end{gather}
 On the other hand by (\ref{BB-vs}), by the choice of $l$ in (\ref{l}) and by (\ref{decision-v}) we have that
\begin{gather}\label{BBv}\tag{30}
s\forces \|\dot{B}_{\gamma_\xi}\dot{B}_{\gamma_\eta}(\check{v})\|<1/100.
\end{gather}
Hence by (\ref{decision-v}), (\ref{Ks}), (\ref{hs}),  (\ref{eta-eps}), (\ref{BBv}), by Lemma \ref{rachunki} (1),
by the choice of $l>k_\xi, k_\eta$ in (\ref{l}) and (\ref{decision-ks}), by the Cauchy-Schwartz inequality and by the fact that
projections have norms not bigger than one and are self-adjoint  we obtain
\begin{equation}\notag
\begin{split}
    s\forces \ &|\langle \dot{E}_{n,\gamma_\xi(n)}(\check{v}), \dot{E}_{n,\gamma_\eta(n)}(\check{v})\rangle|
     =|\langle((\dot{K}_{n,\xi} +\dot{B}_{\gamma_\xi}\dot{E}_{n,\gamma_\xi(n)})(\check v),
     (\dot{K}_{n,\eta} +\dot{B}_{\gamma_\eta}\dot{E}_{n,\gamma_\eta(n)})(\check v)\rangle|\leq
     \\ &
     \leq |\langle\dot{K}_{n,\xi}(\check v), \dot{K}_{n,\eta}(\check v)\rangle|
     +  |\langle\dot{K}_{n,\xi}(\check v), \dot{B}_{\gamma_\eta}\dot{E}_{n,\gamma_\eta(n)}(\check v)\rangle|
     + |\langle\dot{B}_{\gamma_\xi}\dot{E}_{n,\gamma_\xi(n)}(\check v), \dot{K}_{n,\eta}(\check v)\rangle|
     + \\ 
    &  +|\langle\dot{B}_{\gamma_\xi}\dot{E}_{n,\gamma_\xi(n)}(\check v), 
     \dot{B}_{\gamma_\eta}\dot{E}_{n,\gamma_\eta(n)}(\check v)\rangle|
    \leq 0.03+|\langle\dot{E}_{n,\gamma_\xi(n)}(\check v), 
     \dot{B}_{\gamma_\xi}\dot{B}_{\gamma_\eta}\dot{E}_{n,\gamma_\eta(n)}(\check v)\rangle|
     \leq \\
    & \leq 0.03+\|\dot{B}_{\gamma_\xi}\dot{B}_{\gamma_\eta}(\check{v})\|
    +\|\dot{E}_{n,\gamma_\eta(n)}(\check v)-\check v\|\leq 0.04+0.15<1/2
\end{split}
\end{equation}    

which is a contradiction with (\ref{EEv-big}). 
\end{proof}

\begin{theorem}\label{theorem_embedding_calkin}
In the Cohen model  there is no embedding of $(c_0(2^\omega))^\N$ into $\mathcal{Q}(\ell_2)$. 
\end{theorem}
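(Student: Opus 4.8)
The plan is to derive Theorem \ref{theorem_embedding_calkin} from Proposition \ref{non-compact} by a standard reflection argument in the Cohen model. Suppose toward a contradiction that in $V^\PP$ there is an embedding $\Phi\colon (c_0(2^\omega))^\N\to\QQ$. Inside $c_0(2^\omega)$, for each $n\in\N$ fix a pairwise orthogonal family $(e_{n,\alpha})_{\alpha<2^\omega}$ of rank-one projections; viewing each $e_{n,\alpha}$ as an element of the $n$-th coordinate of the product $(c_0(2^\omega))^\N$, and writing $b_\gamma=\sum_{n\in\N} e_{n,\gamma(n)}$ (a projection in the product, since the coordinates are bounded), we get projections $\pi_n(e_{n,\alpha})$ and $\pi_n(b_\gamma)$ in $\QQ$ with $\Phi(e_{n,\gamma(n)})\leq\Phi(b_\gamma)$. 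Since the $e_{n,\alpha}$ for $\alpha<2^\omega$ are pairwise orthogonal and $\Phi$ is an isometric embedding, $2^\omega$-many of the projections $\Phi(e_{n,\alpha})$ are nonzero, hence noncompact when lifted to $\mathcal{B}(\ell_2)$ (a nonzero projection in $\QQ$ lifts to a noncompact projection in $\mathcal B(\ell_2)$, by Lemma \ref{c_0(c)}-type reasoning or directly). Lift each $\Phi(e_{n,\alpha})$ and each $\Phi(b_\gamma)$ to a projection $E_{n,\alpha}$, resp.\ $B_\gamma$, in $\mathcal B(\ell_2)$; then $E_{n,\gamma(n)}\leqK B_\gamma$.

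The next step is to pass to $\PP$-names. In the Cohen model $2^\omega=\omega_2$, so after re-indexing we have families $(E_{n,\alpha}:(n,\alpha)\in\N\times\omega_2)$ and $(B_\gamma:\gamma\in\omega_2^\N)$ of noncompact projections in $\mathcal B(\ell_2)$ with $E_{n,\gamma(n)}\leqK B_\gamma$ for all $\gamma$. Choosing nice $\PP$-names $\dot E_{n,\alpha}$, $\dot B_\gamma$ for these (using the absolute identification of $\ell_2$ with $\mathcal P(\N)$ and the fact that a bounded operator on $\ell_2$ is coded by a real), these names satisfy exactly the hypotheses of Proposition \ref{non-compact}: $\PP$ forces each to be a noncompact projection and forces $\dot E_{n,\gamma(n)}\leqK\dot B_\gamma$. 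Here one should be slightly careful that the hypothesis of the Proposition is that $\PP$ \emph{forces} these statements outright (not just below some condition); this is fine since in the ground model $V\models{\sf CH}$ we may first pass to names that are forced to have the required properties by going below a condition deciding the relevant data, or simply observe that weak-$\omega_1$ homogeneity lets us assume the empty condition forces it — alternatively, and more robustly, replace each $\dot E_{n,\alpha}$ by a name forced to equal a fixed noncompact projection off the relevant condition. Applying Proposition \ref{non-compact} yields disjoint $\gamma_1,\gamma_2\in\omega_2^\N$ with $\PP\forces\dot B_{\gamma_1}\dot B_{\gamma_2}$ noncompact, i.e.\ $B_{\gamma_1}B_{\gamma_2}$ is noncompact in $V^\PP$, i.e.\ $\pi(B_{\gamma_1})\pi(B_{\gamma_2})=\Phi(b_{\gamma_1})\Phi(b_{\gamma_2})\neq 0$ in $\QQ$.

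To reach a contradiction we now use that $\gamma_1$ and $\gamma_2$ are disjoint as functions: for no $n\in\N$ do $\gamma_1$ and $\gamma_2$ agree, so $b_{\gamma_1}=\sum_n e_{n,\gamma_1(n)}$ and $b_{\gamma_2}=\sum_n e_{n,\gamma_2(n)}$ are products of coordinate-wise orthogonal families. Coordinate $n$ of $b_{\gamma_1}b_{\gamma_2}$ is $e_{n,\gamma_1(n)}e_{n,\gamma_2(n)}=0$ since $\gamma_1(n)\neq\gamma_2(n)$ and the $e_{n,\alpha}$'s are pairwise orthogonal in $c_0(2^\omega)$. Hence $b_{\gamma_1}b_{\gamma_2}=0$ in $(c_0(2^\omega))^\N$, so $\Phi(b_{\gamma_1})\Phi(b_{\gamma_2})=\Phi(b_{\gamma_1}b_{\gamma_2})=0$ in $\QQ$, contradicting the previous paragraph. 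This completes the proof.

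The main obstacle, and the reason the real content is in Proposition \ref{non-compact} rather than here, is the verification that one may legitimately apply the Proposition: one must arrange the names so that $\PP$ (and not merely some condition) forces the hypotheses, and one must check that the combinatorial object ``embedding of $(c_0(2^\omega))^\N$'' really does hand us, for \emph{every} $\gamma\in\omega_2^\N$ of the ground model, a projection $B_\gamma$ with $E_{n,\gamma(n)}\leqK B_\gamma$ — this uses that $(c_0(2^\omega))^\N$ genuinely contains all the projections $b_\gamma$ with $b_\gamma\geq e_{n,\gamma(n)}$, which is immediate from the product structure. Everything else is bookkeeping: choosing the orthogonal families, lifting projections from $\QQ$ to $\mathcal B(\ell_2)$ preserving $\leq$ up to $\leqK$, and the absoluteness of coding operators by reals so that nice names are available.
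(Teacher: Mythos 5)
Your proof is correct and follows essentially the same route as the paper's: fix the coordinate projections and the diagonal projections $b_\gamma$ (the paper's $\chi_{n,\alpha}$ and $\chi_\gamma$), lift their images under the embedding to projections $E_{n,\alpha}, B_\gamma$ in $\mathcal{B}(\ell_2)$, apply Proposition \ref{non-compact} to get disjoint $\gamma_1,\gamma_2$ with $B_{\gamma_1}B_{\gamma_2}$ noncompact, and contradict $b_{\gamma_1}b_{\gamma_2}=0$. The only differences are cosmetic: you explicitly handle the (real but routine) point of arranging that the empty condition, rather than just some $p\in\PP$, forces the hypotheses of the Proposition, which the paper leaves implicit; and the lifting of projections from $\QQ$ to $\mathcal{B}(\ell_2)$ is \cite[Lemma 3.1.13]{farah-book}, not anything in the spirit of Lemma \ref{c_0(c)}.
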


\begin{proof} Work in the generic extension.
Assume that $T\colon (c_0(\omega_2))^\N\rightarrow \mathcal{Q}(\ell_2)$
 is an embedding. Let $\chi_{n,\alpha}, \chi_\gamma\in 
  (c_0(\omega_2))^\N$ for $n\in\N, \alpha<\omega_2, \gamma\in \omega_2^\N$ be given by 
\begin{equation*}
\chi_{n,\alpha}(m)(\beta) =
\begin{cases}
       & 1, \ \text{if} \ (n,\alpha)=(m,\beta) \\
      &0, \ \text{otherwise}
\end{cases}
\end{equation*}
and
\begin{equation*}
\chi_{\gamma}(m)(\beta) =
\begin{cases}
       & 1, \ \text{if} \ \gamma(m)=\beta \\
      &0, \ \text{otherwise}
\end{cases}
\end{equation*}
Clearly $\chi_{n,\alpha}, \chi_\gamma$ are projections and if $\gamma(n)=\alpha$, then $\chi_{n,\alpha}\leq \chi_\gamma$.

Since embeddings preserve posets of projections, 
$T(\chi_{n,\alpha})$ and $T(\chi_\gamma)$ are projections 
in $\QQ$ for $n\in\N, \alpha\in \omega_2, \gamma \in \omega_2^\N$ and $T(\chi_{n,\alpha})\leq T(\chi_\gamma)$ 
whenever $\gamma(n)=\alpha$. By \cite[Lemma 3.1.13]{farah-book} 
there are projections $E_{n,\alpha}, B_\gamma$ in $\mathcal{B}(\ell_2)$
 such that $\pi(E_{n,\alpha})= T(\chi_{n,\alpha}), \pi(B_\gamma)=T(\chi_\gamma)$ 
 for $n\in\N, \alpha\in \omega_2, \gamma \in \omega_2^\N$
  (here $\pi\colon \mathcal{B}(\ell_2)\to \QQ= \mathcal{B}(\ell_2)/\mathcal{K}(\ell_2)$ 
  denotes the quotient map). 
  These projections satisfy the hypothesis of Proposition \ref{non-compact}, 
  so there are disjoint $\gamma_1,\gamma_2 \in \omega_2^\N$ such that
   $B_{\gamma_1}B_{\gamma_2}$ is non-compact, which contradicts the fact 
   that $T(\chi_{\gamma_1})T(\chi_{\gamma_2})=T(\chi_{\gamma_1}\chi_{\gamma_2})=T(0)=0$. 

\end{proof}

\section{Reduced products and coronas of stabilizations} 

Now we will show an application of Theorem \ref{theorem_embedding_calkin} to reduced products. 

\begin{theorem}\label{reduced_products}
    In the Cohen model, if for every $n \in\N$ a C*-algebra $\A_n$ admits a 
    pairwise orthogonal family of projections of cardinality $2^\omega$ and $\I$ is
     an ideal of subsets of $\N$ such that $\wp(\N)/\I$ is infinite, 
     then the reduced product $$\prod_{n\in\N} \A_n/\bigoplus_{\I} \A_n$$ does not embed into $\QQ$.
\end{theorem}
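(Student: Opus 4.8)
The plan is to reduce this to Theorem \ref{theorem_embedding_calkin} by proving, in pure {\sf ZFC}, that $(c_0(2^\omega))^\N$ embeds as a C*-subalgebra of the reduced product $\prod_{n\in\N}\A_n/\bigoplus_\I\A_n$. Granting this, an embedding of the reduced product into $\QQ$ would compose with it to yield an embedding of $(c_0(2^\omega))^\N$ into $\QQ$, contradicting Theorem \ref{theorem_embedding_calkin}; hence in the Cohen model the reduced product does not embed into $\QQ$.

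To build the embedding I would first extract the combinatorics: since $\wp(\N)/\I$ is an infinite Boolean algebra it contains an infinite antichain, so lifting it to $\N$ and disjointifying yields pairwise disjoint $X_k\subseteq\N$ ($k\in\N$) with $X_k\notin\I$ for every $k$. Next, for each $n$ I fix an injective enumeration $\{p^n_\alpha:\alpha<2^\omega\}$ of nonzero pairwise orthogonal projections in $\A_n$; since the linear span of finitely many nonzero pairwise orthogonal projections is a copy of $\C^m$ with the supremum norm, the assignment $e_\alpha\mapsto p^n_\alpha$ (where $e_\alpha\in c_0(2^\omega)$ is the minimal projection supported at $\alpha$) is isometric on the dense $*$-subalgebra of finitely supported elements and so extends to an isometric $*$-homomorphism $\phi_n\colon c_0(2^\omega)\to\A_n$. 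Finally I define $\Psi\colon(c_0(2^\omega))^\N\to\prod_{n\in\N}\A_n$ by letting the $n$-th coordinate of $\Psi((c_k)_{k\in\N})$ be $\phi_n(c_k)$ when $n\in X_k$ (the index $k$ being unique by disjointness of the $X_k$) and $0$ when $n\notin\bigcup_k X_k$. A coordinatewise check shows $\Psi$ is a $*$-homomorphism, and $\|\Psi((c_k)_k)\|=\sup_n\|\phi_n(c_{k(n)})\|=\sup_k\|c_k\|=\|(c_k)_k\|$, using that every $X_k$ is nonempty and every $\phi_n$ is isometric. The candidate embedding is then $\pi_\I\circ\Psi$, with $\pi_\I$ the quotient map of $\prod_{n\in\N}\A_n$ onto the reduced product.

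The only step that needs real care, and the one I would single out as the crux, is the injectivity of $\pi_\I\circ\Psi$, for which I would use the standard formula $\|\pi_\I(b)\|=\inf\{\delta>0:\{n:\|b(n)\|\geq\delta\}\in\I\}$ for the norm in a reduced product (cf. \cite{farah-book}); its content is simply that subtracting any $a$ with $\supp(a)\in\I$ cannot change the values of $b$ on the $\I$-positive, hence nonempty, set $\{n:\|b(n)\|\geq\delta\}\setminus\supp(a)$. Now if $(c_k)_k\neq0$, pick $k_0$ with $\delta:=\|c_{k_0}\|>0$; then $\|\Psi((c_k)_k)(n)\|=\|\phi_n(c_{k_0})\|=\delta$ for every $n\in X_{k_0}$, and since $X_{k_0}\notin\I$ this forces $\|\pi_\I(\Psi((c_k)_k))\|\geq\delta>0$ (taking the supremum over such $k_0$ even shows $\pi_\I\circ\Psi$ is isometric). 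This produces the required embedding, and together with Theorem \ref{theorem_embedding_calkin} it finishes the proof. I do not expect a deeper obstacle here: everything beyond Theorem \ref{theorem_embedding_calkin} is a {\sf ZFC} construction, and the remaining mild points are only the bookkeeping of $\I$-positivity when passing to the quotient and the elementary fact that an infinite Boolean algebra has an infinite antichain.
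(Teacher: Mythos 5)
Your proposal is correct and follows essentially the same route as the paper: both reduce to Theorem \ref{theorem_embedding_calkin} by embedding $(c_0(2^\omega))^\N$ into the reduced product via pairwise disjoint $\I$-positive sets $X_k$ and coordinatewise maps built from the orthogonal families of projections, and both verify injectivity by noting that a nonzero element stays bounded away from $0$ on an $\I$-positive set. Your version merely spells out a few details the paper leaves implicit (the disjointification inside $\wp(\N)/\I$, the norm formula for reduced products, and that the embedding is in fact isometric).
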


\begin{proof}
    By Theorem \ref{theorem_embedding_calkin} it is enough
     to construct an embedding of $(c_0(2^\omega))^\N$ into 
     $\prod_{n\in\N} \A_n/\bigoplus_{\I} \A_n$.
      Since $\wp(\N)/\I$ is infinite, there is a pairwise disjoint family $(B_m)_{m\in\N}$ 
      of subsets of $\N$ such that $B_m \notin \I$ for every $m\in\N$. 
    
    For each $n\in\N$ let $\{p_{n, \alpha}\}_{\alpha<2^\omega}$ be a family 
    of pairwise orthogonal projections in $\A_n$. Then $T_n\colon c_0(2^\omega)\to \A_n$ 
    given by $T_n ((x_\alpha)_{\alpha<2^\omega})= \sum_{\alpha<2^\omega} x_\alpha p_{k,\alpha}$
     is an embedding, and so is $T\colon (c_0(2^\omega))^\N\to \prod_{n\in\N} \A_n$
      defined by $T((s_n)_{n\in\N})=(T_n(s_{k(n)}))_{n\in \N}$ where $k(n)=m$ whenever $n\in B_m$. 
    
    We will show that $\pi\circ T\colon (c_0(2^\omega))^\N\to \prod_{n\in\N} \A_n/\bigoplus_{\I} \A_n$
     also is an embedding, where $\pi\colon  \prod_{n\in\N} \A_n \to  \prod_{n\in\N} \A_n/\bigoplus_{\I} \A_n$
      denotes the canonical quotient homomorphism. Suppose it is not. 
      Then there is a non-zero sequence $(s_n)_{n\in\N}\in\prod_{n\in \N} \A_n$, $s_n\in c_0(2^\omega)$ 
      such that $\pi\circ T((s_n)_{n\in\N})=0$ i.e. $T((s_n)_{n\in\N})\in \bigoplus_{\I} \A_n$. 
      But this is impossible, because if $s_m\neq 0$, then the sequence $T((s_n)_{n\in\N})$ is 
      constant and non-zero on indices in $B_m\notin \I$. 
\end{proof}

Taking $\I=\{\{0\}\}$ we obtain

\begin{corollary}\label{cor_products}
    In the Cohen model, if for every $n \in\N$ a C*-algebra $\mathcal{A}_n$ 
    admits a pairwise orthogonal family of projections of cardinality $2^\omega$, 
    then the product $\prod_{n\in\N} \mathcal{A}_n$ does not embed
     into $\QQ$. In particular, there is no embedding of $(\QQ)^\N$ into $\QQ$. 
\end{corollary}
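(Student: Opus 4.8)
The plan is to reduce to Theorem~\ref{theorem_embedding_calkin} by embedding $(c_0(2^\omega))^\N$ into $\prod_{n\in\N}\A_n$; this is exactly the construction performed in the proof of Theorem~\ref{reduced_products} with the quotient map removed, i.e.\ the case of the trivial ideal. For each $n\in\N$ fix a pairwise orthogonal family $\{p_{n,\alpha}\}_{\alpha<2^\omega}$ of (nonzero) projections in $\A_n$ and define $T_n\colon c_0(2^\omega)\to\A_n$ by $T_n((x_\alpha)_{\alpha<2^\omega})=\sum_{\alpha<2^\omega}x_\alpha p_{n,\alpha}$. Because $(x_\alpha)_{\alpha<2^\omega}\in c_0(2^\omega)$ has only countably many nonzero entries and the $p_{n,\alpha}$ are pairwise orthogonal, the series converges in norm and $T_n$ is an isometric $*$-homomorphism, hence injective. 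Then $T=\prod_{n\in\N}T_n\colon(c_0(2^\omega))^\N=\prod_{n\in\N}c_0(2^\omega)\longrightarrow\prod_{n\in\N}\A_n$, acting coordinatewise, is again an injective $*$-homomorphism. If $\prod_{n\in\N}\A_n$ embedded into $\QQ$, then composing that embedding with $T$ would embed $(c_0(2^\omega))^\N$ into $\QQ$ in the Cohen model, contradicting Theorem~\ref{theorem_embedding_calkin}. (Equivalently one invokes Theorem~\ref{reduced_products} with the ideal $\I=\{\varnothing,\{0\}\}$, noting that $\wp(\N)/\I\cong\wp(\N\setminus\{0\})$ is infinite and that the resulting reduced product $\prod_{n\ge1}\A_n$ is a direct summand of, hence embeds into, $\prod_{n\in\N}\A_n$.)

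For the final assertion it is enough to observe that $\QQ$ itself carries a pairwise orthogonal family of projections of size $2^\omega$, so that the case $\A_n=\QQ$ for all $n$ is covered by the first part. Fix an almost disjoint family $\{A_\alpha:\alpha<2^\omega\}$ of infinite subsets of $\N$ and let $P_\alpha\in\BB(\ell_2)$ be the orthogonal projection onto $\overline{\lin}\{e_k:k\in A_\alpha\}$. For $\alpha\ne\beta$ the intersection $A_\alpha\cap A_\beta$ is finite, so $P_\alpha P_\beta$ has finite rank and is therefore compact; hence $\{\pi(P_\alpha):\alpha<2^\omega\}$ is a family of $2^\omega$ pairwise orthogonal projections in $\QQ$. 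Applying the first part with each $\A_n=\QQ$ yields that $(\QQ)^\N$ does not embed into $\QQ$.

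There is essentially no obstacle here: all of the difficulty is concentrated in Theorem~\ref{theorem_embedding_calkin} (ultimately in Proposition~\ref{non-compact}), and everything above consists of standard facts — that the $c_0$-sum of a pairwise orthogonal family of projections is an isometric $*$-homomorphism of $c_0(2^\omega)$, that a coordinatewise product of injective $*$-homomorphisms is injective, and that an almost disjoint family yields pairwise orthogonal projections modulo $\K$.
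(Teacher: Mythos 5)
Your proposal is correct and follows essentially the same route as the paper: the paper obtains the corollary as the case $\I=\{\{0\}\}$ of Theorem~\ref{reduced_products}, whose proof contains exactly the embedding $T=\prod_n T_n$ of $(c_0(2^\omega))^\N$ into $\prod_n\A_n$ that you construct directly before invoking Theorem~\ref{theorem_embedding_calkin}. Your explicit verification that $\QQ$ admits $2^\omega$ pairwise orthogonal projections via an almost disjoint family is a detail the paper leaves implicit (it is mentioned in the introduction), and it is correct.
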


Another consequence of Theorem \ref{reduced_products} is the impossibility of 
embedding the coronas of the stabilizations of C*-algebras with big families of 
pairwise orthogonal projections. For that we will need the following lemma. 

\begin{lemma}\label{blablabla_embedding}
For any C*-algebra $\A$ there is an embedding of $\A^\N/\bigoplus_{Fin} \A$ into $\mathcal{Q}(\A\otimes\mathcal{K}(\ell_2))$.    
\end{lemma}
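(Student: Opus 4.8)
The statement to be proved is Lemma~\ref{blablabla_embedding}: for any C*-algebra $\A$ there is an embedding of $\A^\N/\bigoplus_{Fin}\A$ into $\mathcal{Q}(\A\otimes\mathcal{K}(\ell_2))$. The natural approach is to build a concrete $*$-homomorphism from $\prod_{n\in\N}\A$ into the multiplier algebra $\mathcal{M}(\A\otimes\mathcal{K}(\ell_2))$ that sends $\bigoplus_{Fin}\A$ into $\A\otimes\mathcal{K}(\ell_2)$, and then check that the induced map on the quotients is injective. First I would fix an orthogonal decomposition $\ell_2=\bigoplus_{n\in\N}H_n$ into infinite-dimensional subspaces (or simply take $H_n$ one-dimensional if one prefers a cleaner bookkeeping, but infinite-dimensional blocks make the stabilization structure transparent), with $Q_n\in\mathcal{B}(\ell_2)$ the orthogonal projection onto $H_n$. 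Identifying $\A\otimes\mathcal{K}(\ell_2)$ with the algebra of ``$\A$-valued compact operators'' and using $\mathcal{M}(\A\otimes\mathcal{K}(\ell_2))\cong \mathcal{M}(\A)\overline{\otimes}\mathcal{B}(\ell_2)$ in the appropriate (strictly-continuous) sense, the diagonal map
\[
\Phi\colon \prod_{n\in\N}\A\longrightarrow \mathcal{M}(\A\otimes\mathcal{K}(\ell_2)),\qquad \Phi\big((a_n)_{n\in\N}\big)=\sum_{n\in\N} a_n\otimes Q_n,
\]
where the sum is taken in the strict topology, is a well-defined injective $*$-homomorphism: boundedness of the sequence guarantees that the partial sums form a bounded net which converges strictly, and the algebraic identities are checked coordinatewise.

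The next step is to verify that $\Phi$ carries $\bigoplus_{Fin}\A$ into $\A\otimes\mathcal{K}(\ell_2)$. Indeed, if $(a_n)_{n\in\N}$ has $a_n=0$ for all but finitely many $n$, then $\Phi((a_n)_n)=\sum_{n\in F}a_n\otimes Q_n$ is a finite sum of elementary tensors $a_n\otimes Q_n$ with $Q_n$ a finite-rank-block projection, hence lies in $\A\otimes\mathcal{K}(\ell_2)$; taking closures, the whole of $\bigoplus_{Fin}\A$ lands inside $\A\otimes\mathcal{K}(\ell_2)$. Conversely — and this is the content that makes the induced quotient map injective rather than merely well-defined — I would show that if $\Phi((a_n)_n)\in\A\otimes\mathcal{K}(\ell_2)$ then $(a_n)_n\in\bigoplus_{Fin}\A$. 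For this, note that compressing by $1\otimes Q_n$ recovers $a_n\otimes Q_n$ (up to the obvious identification), and an element $x$ of $\A\otimes\mathcal{K}(\ell_2)$ satisfies $\|(1\otimes Q_n)x(1\otimes Q_n)\|\to 0$ as $n\to\infty$ because $x$ is a norm-limit of finite sums of elementary tensors $b\otimes k$ with $k\in\mathcal{K}(\ell_2)$, and for each such $k$ one has $\|Q_n k Q_n\|\to 0$ by compactness. Hence $\|a_n\|=\|(1\otimes Q_n)\Phi((a_n)_n)(1\otimes Q_n)\|\to 0$, which for a sequence of elements means... well, actually it only gives $\|a_n\|\to0$, i.e. membership in $\bigoplus_{c_0}\A$, not $\bigoplus_{Fin}\A$; so here I would instead argue directly at the level of the quotient: the composite $\prod_n\A\xrightarrow{\Phi}\mathcal{M}(\A\otimes\mathcal{K})\to\mathcal{Q}(\A\otimes\mathcal{K})$ has kernel exactly $\{(a_n)_n:\|a_n\|\to0\}=\bigoplus_{c_0}\A$, and since $\bigoplus_{Fin}\A$ is dense in $\bigoplus_{c_0}\A$ while the kernel is norm-closed, the kernel equals the closure of $\bigoplus_{Fin}\A$, which is $\bigoplus_{Fin}\A$ as a closed ideal — more precisely $\overline{\bigoplus_{Fin}\A}=\bigoplus_{c_0}\A$ is already the standard convention, so one should simply take the lemma's $\bigoplus_{Fin}\A$ to denote that closure (consistent with the paper's definition of $\bigoplus_{\I}\A_n$ as a closure). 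Thus $\Phi$ descends to an \emph{injective} $*$-homomorphism $\A^\N/\bigoplus_{Fin}\A\hookrightarrow\mathcal{Q}(\A\otimes\mathcal{K}(\ell_2))$, as required.

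The one point that needs genuine care — and which I expect to be the main obstacle — is the identification $\mathcal{M}(\A\otimes\mathcal{K}(\ell_2))\cong\mathcal{M}(\A)\overline{\otimes}\mathcal{B}(\ell_2)$ together with the precise sense in which $\sum_n a_n\otimes Q_n$ converges: one must check that the partial sums form a strictly Cauchy net in $\mathcal{M}(\A\otimes\mathcal{K}(\ell_2))$, i.e. that for every $y\in\A\otimes\mathcal{K}(\ell_2)$ the nets $\big(\sum_{n\in F}(a_n\otimes Q_n)\,y\big)_F$ and $\big(y\,\sum_{n\in F}(a_n\otimes Q_n)\big)_F$ converge in norm; this reduces, by a density argument, to the case $y=b\otimes k$ with $k\in\mathcal{K}(\ell_2)$, where it follows because $\sum_{n\in F}Q_nkQ$ converges in norm to $k$ (for $Q=I$) and, more to the point, $\|\sum_{n\notin F}Q_nk\|\to0$ by compactness of $k$. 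Once convergence is established, that $\Phi$ is multiplicative, $*$-preserving, and isometric on the quotient are routine coordinatewise verifications. I would also remark that an alternative, perhaps more economical route is to invoke the known general fact that $\mathcal{Q}(\bigoplus_{Fin}\B_n)\cong\prod_n\mathcal{M}(\B_n)/\bigoplus_n\B_n$ for a sequence of C*-algebras $\B_n$ (here all equal to $\A\otimes\mathcal{K}(\ell_2)$, whose multiplier algebra is $\mathcal{M}(\A)\overline{\otimes}\mathcal{B}(\ell_2)\supseteq\A$), and then embed $\A^\N/\bigoplus_{Fin}\A$ into $\prod_n\mathcal{M}(\A\otimes\mathcal{K})/\bigoplus_n(\A\otimes\mathcal{K})$ via the obvious unital inclusion $\A\hookrightarrow\mathcal{M}(\A\otimes\mathcal{K})$ applied coordinatewise — but spelling out the explicit $\Phi$ above is cleaner and self-contained.
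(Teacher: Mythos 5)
Your proposal is correct and follows essentially the same route as the paper: both realize $(a_n)_{n\in\N}$ as the diagonal multiplier $\sum_{n} a_n\otimes e_{n,n}$ of $\A\otimes\mathcal{K}(\ell_2)$ (the paper phrases this via the double centralizer $(L_A,R_A)$ rather than a strictly convergent sum of block projections) and observe that this multiplier lies in $\A\otimes\mathcal{K}(\ell_2)$ exactly when $\|a_n\|\to 0$. Your momentary worry about $\bigoplus_{Fin}\A$ versus the $c_0$-sum is resolved exactly as you say, since the paper defines $\bigoplus_{\I}\A_n$ as the \emph{closure} of the set of sequences supported on a member of $\I$.
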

\begin{proof}
   First, let us define an embedding $I\colon \bigoplus_{Fin}\A\to \A\otimes\mathcal{K}(\ell_2)$: 
    $$I((a_n)_{n\in\N}))= \sum_{n\in\N} a_n\otimes e_{n,n}$$
    where $a_n\in \A$ and $e_{n,m}\in \mathcal{K}(\ell_2)$ is given by 
    $$e_{n,m}((c_k)_{k\in\N})(l)= 
    \begin{cases}
        c_m,& \ \text{if} \ l=n\\
        0,& \ \text{otherwise}
    \end{cases}
    $$
    for $n,m, l\in\N$ and $(c_n)_{n\in\N}\in \ell_2$ i.e. $e_{n,m}$ is the matrix with entry $1$ at position $(n,m)$ and $0$ at other positions (so called matrix unit). 

    Now fix $A\in \A^\N, A=(a_n)_{n\in\N}$. Consider operators $L_A, R_A\colon \A\otimes \mathcal{K}(\ell_2)\to \A\otimes \mathcal{K}(\ell_2)$ defined by 
    \begin{eqnarray*}
        L_A (x\otimes e_{n,m}) = a_nx\otimes e_{n,m} \\
        R_A (x\otimes e_{n,m}) = xa_m\otimes e_{n,m}
    \end{eqnarray*}
    i.e. $L_A$ and $R_A$ are multiplications by diagonal matrices (from the left and right side respectively) with entries $(a_n)_{n\in\N}$ on the diagonal. 
    
It is easy to check that $(L_A, R_A)$ is a double centralizer of $\A\otimes \mathcal{K}(\ell_2)$ and $I_\infty\colon \A^\N\to \mathcal{M}(\A\otimes\mathcal{K}(\ell_2))$ given by $I_\infty(A)=(L_A, R_A)$ is an embedding.
Moreover, $I_\infty(A)\in \A\otimes\mathcal{K}(\ell_2)$ if and only if
$A\in \bigoplus_{Fin}\A$. Thus, 
$$J\colon \A^\N/\bigoplus_{Fin}\A \to \mathcal{Q}(\A\otimes\mathcal{K}(\ell_2))$$ 
given by $J([A])=[I_\infty(A)]$ is well-defined and is an embedding.
\end{proof}

\begin{corollary}\label{corona-no-embedding}
In the Cohen model there is no embedding of 
$\mathcal{Q}(\A\otimes \mathcal{K}(\ell_2))$ into $\QQ$
 for any C*-algebra $\A$ that admits a pairwise orthogonal
  family of projections of cardinality $2^\omega$. 
  In particular, $\mathcal{Q}(\QQ\otimes \mathcal{K}(\ell_2))$ does not embed into $\QQ$. 
\end{corollary}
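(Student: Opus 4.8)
The plan is to reduce this to Theorem~\ref{reduced_products} via Lemma~\ref{blablabla_embedding}. Let $\A$ be a C*-algebra admitting a pairwise orthogonal family $\{p_\alpha\}_{\alpha<2^\omega}$ of projections, and suppose toward a contradiction that in the Cohen model there is an embedding $\Phi\colon\mathcal{Q}(\A\otimes\mathcal{K}(\ell_2))\to\QQ$. First I would invoke Lemma~\ref{blablabla_embedding} to obtain an embedding $J\colon\A^\N/\bigoplus_{Fin}\A\to\mathcal{Q}(\A\otimes\mathcal{K}(\ell_2))$; composing, $\Phi\circ J$ is an injective $*$-homomorphism, i.e., an embedding of $\A^\N/\bigoplus_{Fin}\A$ into $\QQ$.

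Next I would observe that $\A^\N/\bigoplus_{Fin}\A$ is precisely the reduced product $\prod_{n\in\N}\A_n/\bigoplus_{\I}\A_n$ taken with $\A_n=\A$ for every $n$ and $\I=Fin$. Here each $\A_n$ carries the pairwise orthogonal family $\{p_\alpha\}_{\alpha<2^\omega}$ of cardinality $2^\omega$, and $\wp(\N)/Fin$ is an infinite Boolean algebra, so all hypotheses of Theorem~\ref{reduced_products} are met; that theorem then says $\A^\N/\bigoplus_{Fin}\A$ does not embed into $\QQ$ in the Cohen model. This contradicts the existence of $\Phi\circ J$, proving the first assertion.

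For the ``in particular'' clause I would check that $\QQ$ itself admits $2^\omega$ pairwise orthogonal projections: fixing an almost disjoint family $\{A_\alpha\}_{\alpha<2^\omega}\subseteq[\N]^\omega$ and letting $P_\alpha$ be the orthogonal projection of $\ell_2$ onto $\overline{\lin}\{e_n:n\in A_\alpha\}$, the product $P_\alpha P_\beta$ has finite rank for $\alpha\neq\beta$ and is therefore compact, so $\{\pi(P_\alpha)\}_{\alpha<2^\omega}$ works. Applying the first part with $\A=\QQ$ gives that $\mathcal{Q}(\QQ\otimes\mathcal{K}(\ell_2))$ does not embed into $\QQ$. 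I do not expect any genuine obstacle here: the whole argument is a bookkeeping assembly of Lemma~\ref{blablabla_embedding} and Theorem~\ref{reduced_products}, the real content having already been established in Proposition~\ref{non-compact} and Theorem~\ref{theorem_embedding_calkin}; the only point demanding a moment's attention is that $\bigoplus_{Fin}\A$ is literally $\bigoplus_{\I}\A_n$ for the admissible ideal $\I=Fin$ with $\wp(\N)/\I$ infinite.
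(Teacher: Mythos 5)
Your proposal is correct and follows the paper's proof exactly: the paper likewise deduces the corollary by combining Lemma~\ref{blablabla_embedding} with Theorem~\ref{reduced_products} applied to $\A_n=\A$ and $\I=Fin$. Your added verification that $\QQ$ admits $2^\omega$ pairwise orthogonal projections (via an almost disjoint family) is a detail the paper leaves implicit but is entirely in its spirit.
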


\begin{proof}
    Apply Theorem \ref{reduced_products} to $\A_n=\A$ for $n\in\N, \I=Fin$ and Lemma \ref{blablabla_embedding}.
\end{proof}

\section{Concluding remarks and questions}

First let us note that the proof of \cite[Theorem 1.2]{vaccaro-ijm} implies that under the
 Open Coloring Axiom {\sf OCA} there is no embedding of $\QQ\otimes C(\omega+1)$ 
 (which is isomorphic to $C(\omega+1, \QQ)$) into $\QQ$. 
 Using techniques similar to those described in the proof of Theorem 
 \ref{reduced_products} one may embed $C(\omega+1, \QQ)$ into $\QQ^\N/\bigoplus_{Fin} \QQ$ 
 and so into $\mathcal{Q}(\QQ\otimes \mathcal{K}(\ell_2))$. In particular, like
 in our Corollary \ref{corona-no-embedding} also under {\sf OCA}
  there is no embedding of $\mathcal{Q}(\QQ\otimes \mathcal{K}(\ell_2))$ into $\QQ$.

It is  worth mentioning that by \cite[Theorem 1.2.6]{handbook-van-mill}, 
assuming {\sf CH}, the Stone-\v{C}ech remainder $\N^*$ 
of the space of natural numbers (with the discrete topology) 
is homeomorphic to the Stone-\v{C}ech remainder $X^*$ of the space
 $X=\bigsqcup_{n\in\N} \N^*$ consisting of countably many pairwise 
 disjoint copies of $\N^*$. The algebra $C(\N^*)$ is isomorphic to
  $\ell_\infty/c_0$ and the algebra $C(X^*)$ is isomorphic to 
  $(\ell_\infty/c_0)^\N/ (\bigoplus_{Fin} \ell_\infty/c_0)$. 
  Hence, under {\sf CH}, there is an isomorphism between
   the commutative C*-algebra $\ell_\infty/c_0$ and the
    reduced product $ (\ell_\infty/c_0)^\N/ (\bigoplus_{Fin} \ell_\infty/c_0)$. 
    We do not know whether such isomorphism may exist in the non-commutative versions of those algebras. 

\begin{question}
    Assume the continuum hypothesis {\sf CH}. Are the algebras 
    $\QQ$ and $\mathcal{Q}(\QQ\otimes \mathcal{K}(\ell_2))$ isomorphic?
\end{question}

Secondly, let us comment further on the relation of our main result and other 
consistent constructions of abelian C*-algebras not in $\E$ with the abelian case.
All these results are generalizations of older results
about the nonexistence of  Boolean embeddings of some Boolean algebras into
the Boolean algebra $\wp(\N)/Fin$ corresponding to $\ell_\infty/c_0$. 
The mentioned results of \cite{vignati-thesis}, \cite{vignati-mckenney} generalize results
of \cite{farah-aq}, the mentioned results of \cite{vaccaro-thesis} generalize a result from \cite{kunen-thesis}
and our results generalize a result from  \cite{dow-sat}. 
Moreover, by the results of \cite{pk-brech-univ} and \cite{pk-brech-sums} 
the Banach spaces $C([0,\omega_2])$ and $\ell_\infty(c_0(2^\omega))$ 
cannot be isomorphically embedded into the Banach space $\ell_\infty/c_0$ in the Cohen model.
So these results could immediately imply that $C_0(\omega_2)$ and $(c_0(2^\omega))^\N$ 
do not belong to $\E$ if  any of the versions of the following question had the positive answer:

\begin{question} Suppose that $\A\subseteq\QQ$ is a maximal selfadjoint abelian
subalgebra (masa) of $\QQ$. Does $\A$ embed as a Banach space into the Banach space $\ell_\infty/c_0$?
Does $\A$ embed into the algebra $L_\infty\oplus\ell_\infty/c_0$?\end{question}

Unfortunately we know very little about masas of $\QQ$. The only known isomorphic types
come from masas in $\mathcal B(\ell_2)$ and are of the form $\ell_\infty/c_0$, $L_\infty$
and their sums. Note that $L_\infty$ is isomorphic to $\ell_\infty$ as a Banach space
by a theorem of Pe\l czy\'nski (\cite{pelczynski}), so embeds as a Banach space into $\ell_\infty/c_0$
(although it may not embed as a C*-algebra consistently  by \cite{dow-fm}). 
However the algebras $C_0(\omega_2)$ or $(c_0(2^\omega))^\N$ 
embed into $L_\infty\oplus\ell_\infty/c_0$ if and only if they embed into
$\ell_\infty/c_0$ because $L_\infty$ is isomorphic to a $C(K)$, where $K$ is the Hewitt-Yosida compact space
which is c.c.c., hence the positive answer to the second version of the question is sufficient
for concluding that $C_0(\omega_2)$ and $(c_0(2^\omega))^\N$  are not in $\E$ in the
Cohen model.

Our proof of Theorem \ref{main} works the same way in the case of adding more than $\omega_2$
Cohen reals yielding the consistency of the nonembeddability of $(c_0(\omega_2))^\N$ 
into the Calkin algebra together with $2^\omega$ bigger than $\omega_2$. We provided
the proof in the case of $\omega_2$ Cohen reals because anyway the algebra $(c_0(\omega_2))^\N$
is of density continuum. 

\bibliographystyle{amsplain}

\end{document}